\theoremstyle{plain}
 \newtheorem{theorem}{Theorem}[section]
 \newtheorem{lem}{Lemma}[section]
 \newtheorem{prop}{Proposition}[section]
\theoremstyle{definition}
\theoremstyle{remark}
 \newtheorem{rem}{Remark}[section] 
\begin{document}
\begin{frontmatter}
\title{{Positive solutions for singular\\  double phase problems}}
\author[Nikolaos S. Papageorgiou]{{Nikolaos S. Papageorgiou}}
\ead{npapg@math.ntua.gr}
\address[Nikolaos S. Papageorgiou]{National Technical University, Zografou campus, 15780, Athens, Greece  }
\author[Dusan D. Repovs]{{Du\v{s}an D. Repov\v{s}}\corref{mycorrespondingauthor}}\cortext[mycorrespondingauthor]{Corresponding author}
\ead{dusan.repovs@guest.arnes.si}
\address[Dusan D. Repovs]{Faculty of Education and Faculty of Mathematics and Physics,
	University of Ljubljana \\ \& Institute of Mathematics, Physics and Mechanics, SI-1000,
	Ljubljana, Slovenia}
\author[Calogero Vetro]{{Calogero Vetro}}
\ead{calogero.vetro@unipa.it}
\address[Calogero Vetro]{Department of Mathematics and Computer Science, University of Palermo,  90123, Palermo, Italy}
\begin{abstract}
	We study the existence of positive solutions for a class of double phase Dirichlet equations which have the combined effects of a singular term and of a parametric superlinear term. The differential operator of the equation is the sum of a $p$-Laplacian and of a weighted $q$-Laplacian ($q<p$) with discontinuous weight. Using the Nehari method, we show that for all small values of the parameter $\lambda>0$, the equation has at least two positive solutions. 
\end{abstract}
\begin{keyword}
Double phase problem \sep singular term \sep Nehari manifold \sep positive solutions \sep discontinuous weight
\MSC[2010] 35J60 \sep 35D05
\end{keyword}
\end{frontmatter}

	\section{Introduction}
Let $\Omega \subseteq \mathbb{R}^N$ be a bounded domain with Lipschitz boundary $\partial \Omega$. In this paper we study the following singular double phase problem
\begin{equation}\tag{$P_\lambda$}\label{PL} 
\begin{cases}
- \Delta_p u - \mbox{div}\, \left( \xi(z)|\nabla u|^{q-2}\nabla u \right) = a(z)u^{-\gamma} +\lambda u^{r-1}  \mbox{ in }\Omega,& \\
\quad u \big|_{\partial \Omega}=0, \, 1<q<p<r<p^*, \, 0<\gamma <1, \, u \geq 0, \, \lambda >0.
\end{cases}
\end{equation} 

Here,
 $\Delta_p$ denotes the $p$-Laplace differential operator defined by
$$\Delta_p=\mbox{div}\, \left( |\nabla u|^{p-2}\nabla u \right) \quad \mbox{for all } u \in W_0^{1,p}(\Omega).$$

The weight $\xi: \Omega \to \mathbb{R}_+$ is essentially bounded. Thus the differential operator in \eqref{PL} is the sum of a $p$-Laplacian and of a weighted $q$-Laplacian ($q<p$). The integrand in the energy functional of this operator is
$$k(z,t)=\frac{1}{p}t^p + \frac{1}{q}\xi(z)t^q \quad \mbox{for all }t>0.$$

This is a Carath\'{e}odory function (that is, for all $t>0$, $z \to k(z,t)$ is measurable, and for a.a. $z \in \Omega$, $t \to k(z,t)$ is continuous) which exhibits balanced growth in the $t>0$ variable, that is,
$$\frac{1}{p}t^p \leq k(z,t) \leq c_0[1+t^p] \quad \mbox{for a.a. $z \in \Omega$, all }t>0.$$

However, the presence of the weight $\xi(\cdot)$, which is discontinuous and  not bounded away from zero, does not permit the use of the global regularity theory of Lieberman \cite{Ref9} and of the nonlinear strong maximum principle of Pucci-Serrin \cite{Ref15} (p. 111, 120). The absence of these basic tools leads to a different approach based on the Nehari method. In the reaction (the right hand side), we have the combined effects of a singular term and of a parametric $(p-1)$-superlinear perturbation. We are looking for positive solutions and we show that for all small values $\lambda>0$ of the parameter, problem \eqref{PL} has at least two positive solutions.

Double phase equations have been studied by Cencelj-R\v{a}dulescu-Repov\v{s} \cite{Ref3} (problems with variable growth), Colasuonno-Squassina \cite{Ref4}, Colombo-Mingione \cite{Ref5,Ref6}, Baroni-Colombo-Mingione \cite{Ref1}, and Liu-Dai \cite{Ref10} (problems with a differential operator which exhibits unbalanced growth). A nice survey of the recent works on such equations can be found in R\v{a}dulescu \cite{Ref16}. We also mention the recent works on $(p,q)$-equations (equations driven by the sum of a $p$-Laplacian and of a $q$-Laplacian) with singular terms of Papageorgiou-R\v{a}dulescu-Repov\v{s} \cite{Ref12} and Papageorgiou-Vetro-Vetro \cite{Ref13}. For such differential operators, the integrand of the energy functional is
$k(t)=\frac{1}{p}t^p + \frac{1}{q}t^q$ for all $t>0$ (that is, $\xi(z)=1$) and so the use of the global regularity theory of Lieberman \cite{Ref9} and  the nonlinear maximum principle of Pucci-Serrin \cite{Ref15} is possible. This fact in turn, permits the use of truncation and comparison techniques, which make it
possible to bypass the singularity in the reaction.

The main result of our paper is the following multiplicity theorem for problem \eqref{PL}.

\begin{theorem}\label{MT}
	\label{P8} If hypotheses $H(\xi)$, $H(a)$ hold, then there exists $\widehat{\lambda}^\ast_0>0$ such that for all $\lambda \in (0, \widehat{\lambda}^\ast_0]$  problem \eqref{PL} has at least two positive solutions $u^\ast,v^\ast \in W^{1,p}_0(\Omega)$ such that $\varphi_\lambda(u^\ast)<0\leq \varphi_\lambda(v^\ast)$.
\end{theorem}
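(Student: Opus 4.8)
The plan is to apply the Nehari manifold method to the energy functional
\[
\varphi_\lambda(u)=\frac1p\|\nabla u\|_p^p+\frac1q\int_\Omega\xi(z)|\nabla u|^q\,dz-\frac1{1-\gamma}\int_\Omega a(z)(u^+)^{1-\gamma}\,dz-\frac\lambda r\|u^+\|_r^r ,
\]
which is well defined and continuous on $W^{1,p}_0(\Omega)$ but, owing to the singular term, not $C^1$. For $u\ge0$, $u\ne0$ the fibering map $t\mapsto\psi_u(t):=\varphi_\lambda(tu)$ is smooth on $(0,\infty)$; since $1-\gamma<q<p<r$ and $a$ is bounded away from $0$, it satisfies $\psi_u(0^+)=0$, is strictly negative and decreasing near $0$, and tends to $-\infty$ at $\infty$. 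Writing $\psi_u'(t)=t^{-\gamma}\eta_u(t)$ and checking that $\eta_u$ is unimodal, one gets a (uniform, after normalising $\|\nabla u\|_p=1$) constant $\widehat\lambda_1>0$ such that for $\lambda\in(0,\widehat\lambda_1]$ every such $\psi_u$ has exactly two critical points $0<t_1(u)<t_2(u)$, a nondegenerate local minimum at $t_1(u)$ with $\psi_u(t_1(u))<0$ and a nondegenerate local maximum at $t_2(u)$. Hence the Nehari manifold $\mathcal N_\lambda=\{u\ne0:\langle\varphi_\lambda'(u),u\rangle=0\}$ splits as $\mathcal N_\lambda^+\cup\mathcal N_\lambda^-$ with $\mathcal N_\lambda^0=\emptyset$, where $\mathcal N_\lambda^\pm=\{u\in\mathcal N_\lambda:\ \pm\psi_u''(1)>0\}$; both parts are nonempty, relatively open in $\mathcal N_\lambda$, $u\mapsto t_i(u)$ are continuous, and $t_i(u)u\in\mathcal N_\lambda^\pm$.

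Next I would record the variational estimates, writing $a_1=\|\nabla u\|_p^p$, $a_2=\int_\Omega\xi|\nabla u|^q$, $a_3=\int_\Omega a(u^+)^{1-\gamma}$, $a_4=\|u^+\|_r^r$. On $\mathcal N_\lambda$ the constraint reads $a_1+a_2=a_3+\lambda a_4$, so $\varphi_\lambda(u)=(\tfrac1p-\tfrac1r)a_1+(\tfrac1q-\tfrac1r)a_2-(\tfrac1{1-\gamma}-\tfrac1r)a_3\ge c_1\|\nabla u\|_p^p-c_2\|\nabla u\|_p^{1-\gamma}$ (using $a_2\ge0$ and the Sobolev--Poincar\'e bound $a_3\le c\|\nabla u\|_p^{1-\gamma}$). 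Thus $\varphi_\lambda$ is coercive and bounded below on $\mathcal N_\lambda$, and $\varphi_\lambda(u)\ge0$ as soon as $\|\nabla u\|_p\ge\rho_0$ for a $\lambda$-independent $\rho_0>0$. The defining inequality $\psi_u''(1)<0$ of $\mathcal N_\lambda^-$ combined with $a_4\le c\|\nabla u\|_p^r$ yields $\|\nabla u\|_p\ge C\lambda^{-1/(r-p)}$ for all $u\in\mathcal N_\lambda^-$; hence there is $\widehat\lambda_2>0$ with $m_\lambda^-:=\inf_{\mathcal N_\lambda^-}\varphi_\lambda\ge0$ for $\lambda\in(0,\widehat\lambda_2]$. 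Finally $t_1(u)u\in\mathcal N_\lambda^+$ with $\varphi_\lambda(t_1(u)u)=\psi_u(t_1(u))<0$, so $m_\lambda^+:=\inf_{\mathcal N_\lambda^+}\varphi_\lambda<0$.

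I would then minimise $\varphi_\lambda$ over each part. Minimising sequences are bounded by coercivity, so up to a subsequence $u_n\rightharpoonup u$ in $W^{1,p}_0(\Omega)$, $u_n\to u$ in $L^r(\Omega)\cap L^{1-\gamma}(\Omega)$ and a.e.; $u\ne0$ because $m_\lambda^+<0$ (for $\mathcal N_\lambda^+$) and because of the lower bound $\|\nabla u_n\|_p\ge C\lambda^{-1/(r-p)}$ together with the Nehari identity (for $\mathcal N_\lambda^-$). The main obstacle is strong convergence $u_n\to u$ in $W^{1,p}_0(\Omega)$: weak lower semicontinuity of $\tfrac1p\|\nabla\cdot\|_p^p+\tfrac1q\int_\Omega\xi|\nabla\cdot|^q$ and convergence of the lower-order terms give $\langle\varphi_\lambda'(u),u\rangle\le0$; were it strict, comparing the limiting fibering polynomial $\lim_n t\psi_{u_n}'(t)$ (which vanishes at $t=1$) with the pointwise-strictly-smaller $t\psi_u'(t)$ locates the critical points of $\psi_u$ relative to $1$ (namely $t_1(u)>1$ in the $\mathcal N_\lambda^+$ case, $t_2(u)<1$ in the $\mathcal N_\lambda^-$ case), which then contradicts the minimality of $\varphi_\lambda$ at the levels $m_\lambda^\pm$ via the monotonicity of $\psi_u$ and $\psi_{u_n}$; this argument is somewhat more delicate in the $\mathcal N_\lambda^-$ case. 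Consequently $u\in\mathcal N_\lambda$, and since $\mathcal N_\lambda^0=\emptyset$ and the energy levels separate the two parts, $u$ lies in the right part with $\varphi_\lambda(u)$ equal to its infimum, while $\nabla u_n\to\nabla u$ in $L^p(\Omega)$ by uniform convexity. This yields $u^\ast\in\mathcal N_\lambda^+$ with $\varphi_\lambda(u^\ast)=m_\lambda^+<0$ and $v^\ast\in\mathcal N_\lambda^-$ with $\varphi_\lambda(v^\ast)=m_\lambda^-\ge0$; they are distinct since $\mathcal N_\lambda^+\cap\mathcal N_\lambda^-=\emptyset$, and both may be taken nonnegative since $\varphi_\lambda$ depends only on $u^+$.

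Finally I would verify that $u^\ast,v^\ast$ are positive solutions of \eqref{PL}. For $w\in\{u^\ast,v^\ast\}$ and $h\in W^{1,p}_0(\Omega)$, $h\ge0$, the relative openness of $\mathcal N_\lambda^\pm$ and $\psi_w''(1)\ne0$ give, via the implicit function theorem, a $C^1$ curve $t\mapsto s(t)>0$, $s(0)=1$, with $s(t)(w+th)\in\mathcal N_\lambda^\pm$; minimality forces $\varphi_\lambda(s(t)(w+th))\ge\varphi_\lambda(w)$, and differentiating at $t=0^+$ (the $s'(0)$-term drops since $w\in\mathcal N_\lambda$) gives
\[
\langle A(w),h\rangle-\int_\Omega a(z)w^{-\gamma}h\,dz-\lambda\int_\Omega w^{r-1}h\,dz\ge0 ,
\]
$A$ being the weak form of the differential operator of \eqref{PL}; a Fatou argument first ensures $a\,w^{-\gamma}h\in L^1(\Omega)$, in particular $w>0$ a.e.\ in $\Omega$. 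Taking $h=w$ and invoking the Nehari identity forces equality, and a standard truncation/density argument promotes the inequality to the variational equality for all test functions. Hence $w$ solves \eqref{PL} and $w>0$. Taking $\widehat\lambda^\ast_0=\min\{\widehat\lambda_1,\widehat\lambda_2\}$ completes the proof.
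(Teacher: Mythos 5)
Your overall route is the paper's route: the same Nehari decomposition $N_\lambda=N_\lambda^+\cup N_\lambda^0\cup N_\lambda^-$, fibering-map analysis giving a uniform threshold in $\lambda$, coercivity of $\varphi_\lambda$ on $N_\lambda$, $m_\lambda^+<0$, $m_\lambda^-\ge 0$ for small $\lambda$ via the lower bound on $N_\lambda^-$, strong convergence of minimizing sequences by comparing the fibering maps of $u_n$ and of the weak limit, and finally the implicit-function-theorem (natural constraint) argument plus one-sided differentiation to handle the singular term. The only substantive difference in the last step (you restrict to $h\ge 0$ and then use $h=w$ plus a truncation/density argument, while the paper perturbs by arbitrary $h$ and lets $t\to0^+$) is a legitimate standard alternative.

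There is, however, one genuine flaw: you assume that ``$a$ is bounded away from $0$'', which is not granted by $H(a)$ (only $a\in L^\infty$, $a\ge 0$ a.e., $a\not\equiv 0$). You use this to claim that \emph{every} fibering map $\psi_u$ is negative and decreasing near $0$ and has exactly two nondegenerate critical points, and from this you deduce $N_\lambda^0=\emptyset$, the relative openness of $N_\lambda^\pm$ and continuity of $t_i(\cdot)$, and at the end the a.e.\ positivity $w>0$. If $a$ vanishes on a set of positive measure, there are directions $u\ge0$, $u\ne0$ with $\int_\Omega a|u|^{1-\gamma}dz=0$, for which $\psi_u$ has a single critical point (a maximum), so your two-critical-point picture fails as stated; $N_\lambda^0=\emptyset$ must then be obtained differently (the paper's Proposition \ref{P3} does this by a contradiction argument using only $\int_\Omega a|u|^{1-\gamma}dz\le c\|u\|^{1-\gamma}$, with no lower bound on $a$, and the $t_1<t_0<t_2$ construction is only ever applied where $\int_\Omega a|u|^{1-\gamma}dz>0$, which holds along the relevant minimizers because $m_\lambda^+<0$). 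Likewise, integrability of $a\,w^{-\gamma}h$ only forces $w>0$ a.e.\ on $\{a>0\}$, so your claim $w>0$ a.e.\ in $\Omega$ is not justified under $H(a)$; note the paper itself only asserts $w\ge0$, $w\not\equiv0$, precisely because the discontinuous, not-bounded-below weight $\xi$ blocks the strong maximum principle. A smaller imprecision: to get $v^\ast\ne0$ for the $N_\lambda^-$ minimizing sequence you need a lower bound that survives the passage to the limit, i.e.\ the $L^r$ bound $\|u\|_r\ge c\,\lambda^{-1/(r-p)}$ as in \eqref{eq23}, not merely $\|\nabla u_n\|_p\ge C\lambda^{-1/(r-p)}$, since the gradient norm can drop under weak convergence.
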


\section{Preliminaries}\label{S2}

By $W_0^{1,p}(\Omega)$ we denote the usual ``Dirichlet'' Sobolev space and by $\|\cdot \|$ we denote the norm of $W_0^{1,p}(\Omega)$. The Poincar\'e inequality (see Papageorgiou-R\v{a}dulescu-Repov\v{s} \cite{Ref11}, p. 43) implies that we can have
$$\|u\|=\|\nabla u\|_p \quad \mbox{for all }W_0^{1,p}(\Omega).$$

Here, by $\|\cdot \|_s$ ($1 \leq s \leq +\infty$) we denote the norm of $L^{s}(\Omega,\mathbb{R}^m)$, $m \in \mathbb{N}$. Also, by $|\cdot |$ we denote the norm of $\mathbb{R}^N$ and by $p^\ast$ the critical Sobolev exponent corresponding to $p$, that is 
$$p^\ast=\begin{cases}\dfrac{Np}{N-p} & \mbox{if } p<N,\\ +\infty & \mbox{if } N \leq p.\end{cases}$$

The hypotheses on the data of \eqref{PL} are the following:
\begin{itemize}
	\item[$H(\xi)$:]  $\xi \in L^\infty(\Omega)$ and $\xi(z) > 0$ for a.a. $z \in \Omega$.
	\item[$H(a)$:]  $a \in L^\infty(\Omega)$ and $a(z) \geq 0$ for a.a. $z \in \Omega$, $a \not \equiv 0$.
\end{itemize}

The energy (Euler) functional for this problem $\varphi_\lambda : W_0^{1,p}(\Omega) \to \mathbb{R}$ is given by
\begin{align*}\varphi_\lambda(u)&=\frac{1}{p}\|\nabla u\|^p_p +\frac{1}{q} \int_\Omega \xi(z)|\nabla u|^q dz - \frac{1}{1-\gamma} \int_{\Omega} a(z)|u|^{1-\gamma} dz - \frac{\lambda}{r}\|u\|_r^r\\ &  \hskip 7cm \mbox{for all } u \in W_0^{1,p}(\Omega).\end{align*}

On account of the singular term $a(z)u^{-\gamma}$, this functional is not $C^1$. So, the use of variational methods based on the critical point theory presents difficulties which are compounded by the fact that the weight $\xi(\cdot)$ is discontinuous and not bounded away from zero. For this reason our approach is based on the Nehari method.

Recall that $u \in W_0^{1,p}(\Omega)$ is a weak solution of \eqref{PL}, if $u(z)\geq 0$ for a.a. $z \in \Omega$, $u \not \equiv 0$ and 
\begin{align*} & \int_\Omega |\nabla u|^{p-2} (\nabla u, \nabla h)_{\mathbb{R}^N}dz + \int_\Omega \xi(z)|\nabla u|^{q-2} (\nabla u, \nabla h)_{\mathbb{R}^N}dz\\ & = \int_\Omega a(z)u^{-\gamma}h dz + \lambda \int_\Omega u^{r-1}h dz \quad \mbox{for all }h \in W_0^{1,p}(\Omega).
\end{align*}	

For every $\lambda>0$, we introduce the Nehari manifold for problem \eqref{PL} defined by
{\small 
$$N_\lambda =\left\{u \in W_0^{1,p}(\Omega) \, : \, \|\nabla u\|_p^p + \int_\Omega \xi(z) |\nabla u |^q dz = \int_\Omega a(z) |u|^{1-\gamma}dz + \lambda \|u\|_r^r, \, u \neq 0\right\}.$$
}
Evidently,
 the Nehari manifold contains the weak solutions of \eqref{PL} and as we will see in the sequel, for small $\lambda >0$ one has $N_\lambda \neq \emptyset$. The Nehari manifold is much smaller than $W_0^{1,p}(\Omega)$ and so $\varphi_\lambda \Big|_{N_\lambda}$ can have nice properties which fail to be true globally.

It will be helpful to decompose $N_\lambda$ into three disjoint parts:
 
\begin{align*}
	&N_\lambda^+ =\Big\{u \in N_\lambda \, : \,(p+\gamma-1) \|\nabla u\|_p^p +(q+\gamma-1) \int_\Omega \xi(z) |\nabla u |^q dz   \\ & \hskip 8cm   -\lambda (r+\gamma-1) \|u\|_r^r> 0\Big\},\\
	& N^0_\lambda =\Big\{u \in N_\lambda \, : \,(p+\gamma-1) \|\nabla u\|_p^p +(q+\gamma-1) \int_\Omega \xi(z) |\nabla u |^q dz \\ & \hskip 8cm =\lambda (r+\gamma-1) \|u\|_r^r \Big\},\\
	&N^-_\lambda =\{u \in N_\lambda \, : \,(p+\gamma-1) \|\nabla u\|_p^p +(q+\gamma-1) \int_\Omega \xi(z) |\nabla u |^q dz \\ & \hskip 8cm -\lambda (r+\gamma-1) \|u\|_r^r< 0\Big\}.
\end{align*}	 

\section{The proof of Theorem~\ref{MT}}

In this section, using the Nehari method, we   shall prove our main result, Theorem~\ref{MT}, which asserts that for all small $\lambda >0$, problem \eqref{PL} has at least two positive solutions. Our proof will be broken down in a sequence of propositions.

\begin{prop}
	\label{P1} If hypotheses $H(\xi)$, $H(a)$ hold and $\lambda >0$, then $\varphi_\lambda \Big|_{N_\lambda}$ is coercive.
\end{prop}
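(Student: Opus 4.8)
The plan is to show that on the Nehari manifold $N_\lambda$ the functional $\varphi_\lambda$ is bounded below by an expression that blows up as $\|u\|\to\infty$. The crucial device is the defining constraint of $N_\lambda$, which lets me eliminate the highest-order (and hardest to control) term from $\varphi_\lambda(u)$.

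First I would use the constraint
$$\|\nabla u\|_p^p + \int_\Omega \xi(z)|\nabla u|^q\,dz = \int_\Omega a(z)|u|^{1-\gamma}\,dz + \lambda\|u\|_r^r$$
to substitute for $\lambda\|u\|_r^r$ inside $\varphi_\lambda(u)$. After collecting terms, $\varphi_\lambda\big|_{N_\lambda}(u)$ becomes
$$\left(\frac1p-\frac1r\right)\|\nabla u\|_p^p + \left(\frac1q-\frac1r\right)\int_\Omega \xi(z)|\nabla u|^q\,dz - \left(\frac1{1-\gamma}-\frac1r\right)\int_\Omega a(z)|u|^{1-\gamma}\,dz.$$
Since $q<p<r$ and $0<\gamma<1$, all three parenthetical constants are strictly positive, so the first two terms are nonnegative (the $\xi$-term by $H(\xi)$) and only the $|u|^{1-\gamma}$ term has an unfavorable sign.

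Next I would estimate the singular term. By $H(a)$, $a\in L^\infty(\Omega)$, so $\int_\Omega a(z)|u|^{1-\gamma}\,dz \leq \|a\|_\infty \|u\|_{1-\gamma}^{1-\gamma}$; then using that $\Omega$ is bounded together with the continuous embedding $W_0^{1,p}(\Omega)\hookrightarrow L^p(\Omega)\hookrightarrow L^{1-\gamma}(\Omega)$ (recall $1-\gamma<1<p$), I get
$$\int_\Omega a(z)|u|^{1-\gamma}\,dz \leq c_1\|u\|^{1-\gamma}$$
for some $c_1>0$. Hence
$$\varphi_\lambda\big|_{N_\lambda}(u) \geq \left(\frac1p-\frac1r\right)\|u\|^p - c_1\left(\frac1{1-\gamma}-\frac1r\right)\|u\|^{1-\gamma}.$$
Since $p>1>1-\gamma$, the right-hand side tends to $+\infty$ as $\|u\|\to\infty$, which gives coercivity.

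The main obstacle, such as it is, is being careful that the argument does not secretly need the discontinuous weight $\xi$ to be bounded below — it does not, because after the substitution the $\xi$-term carries a \emph{positive} coefficient and can simply be dropped from the lower bound using only $\xi\geq 0$ a.e. A minor point to get right is the direction of the Sobolev embedding for the subunit exponent $1-\gamma$, where boundedness of $\Omega$ is what makes $L^p(\Omega)\hookrightarrow L^{1-\gamma}(\Omega)$ hold; after that the estimate is routine.
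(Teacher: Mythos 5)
Your proposal is correct and follows essentially the same route as the paper: use the Nehari constraint to eliminate $\lambda\|u\|_r^r$, observe that the resulting coefficients of the gradient terms are positive while only the singular term has an unfavorable sign, and then bound $\int_\Omega a(z)|u|^{1-\gamma}\,dz \leq c\|u\|^{1-\gamma}$ via $a\in L^\infty(\Omega)$, H\"older's inequality on the bounded domain, and the Sobolev/Poincar\'e inequalities, so that $\varphi_\lambda(u)\geq c_1\|u\|^p-c_2\|u\|^{1-\gamma}$ with $p>1>1-\gamma$. The only cosmetic difference is that you drop the $\xi$-term from the lower bound (as the paper implicitly does as well), so nothing further is needed.
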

\begin{proof}
	Let $u \in N_\lambda$. From the definition of $N_\lambda$, we have 
	\begin{equation}
	\label{eq1} - \frac{1}{r}\|\nabla u\|_p^p - \frac{1}{r} \int_\Omega \xi(z) |\nabla u |^q dz +\frac{1}{r} \int_\Omega a(z) |u|^{1-\gamma}dz + \frac{\lambda}{r} \|u\|_r^r= 0.
	\end{equation}	
	
	Using \eqref{eq1}, we have
	\begin{align*}
		&	\varphi_\lambda(u)=\left[\frac{1}{p} -\frac{1}{r}\right]\|\nabla u\|^p_p +\left[\frac{1}{q} -\frac{1}{r}\right] \int_\Omega \xi(z)|\nabla u|^q dz\\ & \hskip 1,5cm + \left[\frac{1}{r} -\frac{1}{1-\gamma}\right]  \int_{\Omega} a(z)|u|^{1-\gamma} dz\\ \Rightarrow \quad &  	\varphi_\lambda(u) \geq  c_1\|u\|^p - c_2 \|u\|^{1-\gamma} \quad \mbox{for some $c_1,c_2>0$ (since $q<p<r$)}.
	\end{align*}
	
	Here we have used
	the Poincar\'e's inequality, Theorem 13.17
	on
	 p.196 of Hewitt-Stromberg \cite{Ref8} and the Sobolev embedding theorem. From the last inequality and since $p>1>1-\gamma$, we
	 can
	  conclude that $\varphi_\lambda \Big|_{N_\lambda}$ is coercive.	
\end{proof}

Let $m_\lambda^+=\inf_{N_\lambda^+}\varphi_\lambda$.

\begin{prop}
	\label{P2} If hypotheses $H(\xi)$, $H(a)$ hold and $N_\lambda^+ \neq \emptyset$, then $m_\lambda^+<0$.
\end{prop}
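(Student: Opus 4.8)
The plan is to exhibit, for a fixed $u \in N_\lambda^+$, an explicit negative upper bound for $\varphi_\lambda(u)$ purely from the two relations that define membership in $N_\lambda^+$. First I would record the Nehari identity
\[
\|\nabla u\|_p^p + \int_\Omega \xi(z)|\nabla u|^q\,dz = \int_\Omega a(z)|u|^{1-\gamma}\,dz + \lambda\|u\|_r^r,
\]
and use it to eliminate the $r$-term from $\varphi_\lambda$: substituting $\lambda\|u\|_r^r = \|\nabla u\|_p^p + \int_\Omega \xi|\nabla u|^q\,dz - \int_\Omega a|u|^{1-\gamma}\,dz$ into the functional gives
\[
\varphi_\lambda(u) = \left[\frac{1}{p}-\frac{1}{r}\right]\|\nabla u\|_p^p + \left[\frac{1}{q}-\frac{1}{r}\right]\int_\Omega \xi(z)|\nabla u|^q\,dz + \left[\frac{1}{r}-\frac{1}{1-\gamma}\right]\int_\Omega a(z)|u|^{1-\gamma}\,dz,
\]
exactly as in the proof of Proposition~\ref{P1}.

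Next I would bring in the $N_\lambda^+$ inequality
\[
(p+\gamma-1)\|\nabla u\|_p^p + (q+\gamma-1)\int_\Omega \xi(z)|\nabla u|^q\,dz > \lambda(r+\gamma-1)\|u\|_r^r,
\]
and again use the Nehari identity to replace $\lambda\|u\|_r^r$ on the right, turning this into an inequality involving only $\|\nabla u\|_p^p$, $\int_\Omega \xi|\nabla u|^q\,dz$ and $\int_\Omega a|u|^{1-\gamma}\,dz$. The idea is to solve this for $\int_\Omega a(z)|u|^{1-\gamma}\,dz$ and feed the resulting bound into the displayed expression for $\varphi_\lambda(u)$; since the coefficient $\frac{1}{r}-\frac{1}{1-\gamma}$ of that integral is negative (because $1-\gamma<1<r$), an upper bound on the $a$-integral produces an upper bound on $\varphi_\lambda(u)$. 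Carrying out the linear algebra, the coefficients of $\|\nabla u\|_p^p$ and $\int_\Omega \xi|\nabla u|^q\,dz$ that remain should combine to something strictly negative using $1-\gamma < q < p < r$; one then concludes $\varphi_\lambda(u) < 0$ for every $u \in N_\lambda^+$, hence $m_\lambda^+ = \inf_{N_\lambda^+}\varphi_\lambda < 0$ (the infimum of a set of numbers each bounded above by a common negative constant — or, a little more carefully, each strictly negative with a uniform gap coming from the structural constants — is negative).

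The main obstacle I anticipate is purely bookkeeping: checking that after the two substitutions the residual coefficients of the gradient terms genuinely have the right sign, i.e. that the arithmetic among the fractions $\frac1p,\frac1q,\frac1r,\frac1{1-\gamma}$ and the factors $(p+\gamma-1),(q+\gamma-1),(r+\gamma-1)$ conspires correctly under the ordering $1<q<p<r$, $0<\gamma<1$. A clean way to organize this, which I would adopt if the direct substitution gets messy, is the fibering-map viewpoint: for $u\in N_\lambda^+$ the function $t\mapsto \varphi_\lambda(tu)$ has a local minimum at $t=1$ with $\varphi_\lambda(u)=\psi_u(1)$ where $\psi_u(t)=\frac{t^p}{p}\|\nabla u\|_p^p+\frac{t^q}{q}\int_\Omega\xi|\nabla u|^q\,dz-\frac{t^{1-\gamma}}{1-\gamma}\int_\Omega a|u|^{1-\gamma}\,dz-\frac{\lambda t^r}{r}\|u\|_r^r$; one checks $\psi_u(0)=0$, $\psi_u'(1)=0$, $\psi_u''(1)>0$ (the last being precisely the $N_\lambda^+$ condition), and from these deduces $\psi_u(1)<0$ by a convexity/monotonicity argument on $[0,1]$. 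Either route gives the strict inequality, and I expect no analytic subtlety beyond that sign check.
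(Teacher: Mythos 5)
Your main route is essentially the paper's own proof: both arguments use only the Nehari identity and the strict $N_\lambda^+$ inequality and eliminate one term linearly -- the paper first removes the singular integral and then absorbs $\lambda\|u\|_r^r$ via the $N_\lambda^+$ inequality, whereas you remove $\lambda\|u\|_r^r$ first and then control the $a$-integral; the two substitutions are algebraically equivalent and lead to the same final bound
$\varphi_\lambda(u)\leq \frac{p+\gamma-1}{1-\gamma}\bigl(\frac{1}{r}-\frac{1}{p}\bigr)\|\nabla u\|_p^p+\frac{q+\gamma-1}{1-\gamma}\bigl(\frac{1}{r}-\frac{1}{q}\bigr)\int_\Omega \xi(z)|\nabla u|^q\,dz<0$,
so the bookkeeping you defer does close (using $1-\gamma<1<q<p<r$ and $\|\nabla u\|_p>0$).

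Two slips in your write-up should be corrected, though neither is fatal. First, since the coefficient $\frac{1}{r}-\frac{1}{1-\gamma}$ of the $a$-integral is negative, what you need is a \emph{lower} bound on $\int_\Omega a(z)|u|^{1-\gamma}\,dz$, not an upper bound as you state; fortunately the $N_\lambda^+$ inequality combined with the Nehari identity gives exactly that, namely $(r+\gamma-1)\int_\Omega a(z)|u|^{1-\gamma}\,dz>(r-p)\|\nabla u\|_p^p+(r-q)\int_\Omega \xi(z)|\nabla u|^q\,dz$, and feeding this in yields the display above. Second, for the last step no uniform negative gap is available (the bound scales with $\|\nabla u\|_p^p$) nor needed: since $N_\lambda^+\neq\emptyset$, fix any $u_0\in N_\lambda^+$ and conclude $m_\lambda^+\leq\varphi_\lambda(u_0)<0$. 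Finally, your fallback fibering argument is not complete as stated: $\psi_u(0)=0$, $\psi_u'(1)=0$, $\psi_u''(1)>0$ alone do not force $\psi_u(1)<0$; one must use the specific shape of $t\mapsto w_u(t)$ (increasing then decreasing, as in the paper's Proposition \ref{P4}) to see that $t=1$ is the first zero of $\psi_u'$ and that $\psi_u$ decreases on $(0,1)$. The direct algebraic route is therefore the cleaner one, and it is the one the paper takes.
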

\begin{proof}
	By the definition of $N_\lambda^+$, we have 
	\begin{equation}\label{eq2}
	\lambda \|u\|_r^r < \frac{p+\gamma-1}{r+\gamma-1} \|\nabla u\|_p^p  +\frac{q+\gamma-1}{r+\gamma-1} \int_\Omega \xi(z)|\nabla u|^q dz \quad \mbox{for all }  u \in N_\lambda^+.
	\end{equation}	
	
	We know that $N_\lambda^+ \subseteq N_\lambda$. So, we have 	
	
	\begin{equation}
	\label{eq3} -\frac{1}{1-\gamma}\int_\Omega a(z)|u|^{1-\gamma}dz = - \frac{1}{1-\gamma}\|\nabla u\|_p^p - \frac{1}{1-\gamma} \int_\Omega \xi(z)|\nabla u|^q dz + \frac{\lambda}{1-\gamma} \|u\|^r_r
	\end{equation}
	
	for all $u \in N_\lambda^+$. Now, 	for all $u \in N_\lambda^+$, we have 
	
	\begin{align*}
	&	\varphi_\lambda(u)\\& =\frac{1}{p}\|\nabla u\|^p_p +\frac{1}{q} \int_\Omega \xi(z)|\nabla u|^q dz - \frac{1}{1-\gamma} \int_{\Omega} a(z)|u|^{1-\gamma} dz - \frac{\lambda}{r}\|u\|_r^r \\ & =\left[\frac{1}{p} -\frac{1}{1-\gamma}\right]\|\nabla u\|^p_p +\left[\frac{1}{q} -\frac{1}{1-\gamma}\right] \int_\Omega \xi(z)|\nabla u|^q dz +\lambda  \left[\frac{1}{1-\gamma} -\frac{1}{r}\right]  \|u\|_r^r \\ & \hskip 10cm \mbox{(see \eqref{eq3})} \\	& \leq \left[ \frac{-(p+\gamma-1)}{p(1-\gamma)}+\frac{p+\gamma-1}{r+\gamma-1}\frac{r+\gamma-1}{r(1-\gamma)}  \right]\| \nabla u\|^p_p\\ & \quad +  \left[ \frac{-(q+\gamma-1)}{q(1-\gamma)}+\frac{q+\gamma-1}{r+\gamma-1}\frac{r+\gamma-1}{r(1-\gamma)}  \right] \int_\Omega \xi(z) |\nabla u |^q dz \quad \mbox{(see \eqref{eq2})}\\ & = \frac{p+\gamma-1}{1-\gamma}\left[\frac{1}{r} -\frac{1}{p}\right]\|\nabla u\|_p^p + \frac{q+\gamma-1}{1-\gamma}\left[\frac{1}{r} -\frac{1}{q}\right]\int_\Omega \xi(z) |\nabla u |^q dz\\ & <0 \quad \mbox{(see hypothesis $H(\xi)$ and recall that $q<p<r$),}\\ \Rightarrow \, & \varphi_\lambda \Big|_{N_\lambda^+}<0,\\  \Rightarrow \, & m_\lambda^+<0.
	\end{align*}		
\end{proof}	

\begin{prop}
	\label{P3} If hypotheses $H(\xi)$, $H(a)$ hold, then there exists $\lambda^\ast >0$ such that $N_\lambda^0 = \emptyset$ for all $\lambda \in (0,\lambda^\ast)$.
\end{prop}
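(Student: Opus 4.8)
The plan is to argue by contradiction: suppose no such $\lambda^\ast$ exists, so there is a sequence $\lambda_n \downarrow 0$ (or simply $\lambda_n \to 0^+$) and functions $u_n \in N^0_{\lambda_n}$. The key point is that membership in $N^0_\lambda$ together with membership in $N_\lambda$ gives us \emph{two} equations relating the three quantities $A_n := \|\nabla u_n\|_p^p$, $B_n := \int_\Omega \xi(z)|\nabla u_n|^q\,dz$, and $C_n := \lambda_n\|u_n\|_r^r$ (the singular term integral is then determined). First I would show $u_n$ cannot degenerate: from the Nehari identity and the $N^0$ identity one can solve and obtain, after eliminating $C_n$, an inequality of the form $c_3\|u_n\|^p \le \|\nabla u_n\|_p^p + \int_\Omega \xi|\nabla u_n|^q\,dz \le c_4\int_\Omega a(z)|u_n|^{1-\gamma}\,dz \le c_5\|u_n\|^{1-\gamma}$, which forces $\|u_n\|$ to stay in a bounded interval bounded away from $0$ (using $p > 1-\gamma$). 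Hence, along a subsequence, $u_n \rightharpoonup u$ in $W^{1,p}_0(\Omega)$ with $u \not\equiv 0$, and $u_n \to u$ in $L^r(\Omega)$ and in the weighted spaces needed to pass to the limit in the integrals involving $a$ and $\xi$.

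Next I would combine the two defining identities to isolate the $\lambda_n\|u_n\|_r^r$ term. Subtracting appropriate multiples: from $u_n \in N_{\lambda_n}$ we have $A_n + B_n = D_n + C_n$ where $D_n := \int_\Omega a(z)|u_n|^{1-\gamma}\,dz$, and from $u_n \in N^0_{\lambda_n}$ we have $(p+\gamma-1)A_n + (q+\gamma-1)B_n = (r+\gamma-1)C_n$. Eliminating $C_n$ yields
$$(p-r)A_n + (q-r)B_n = (r+\gamma-1)D_n - (r+\gamma-1)(A_n+B_n),$$
so that $D_n$ is a fixed negative-coefficient combination of $A_n$ and $B_n$; since $A_n, B_n \ge 0$ and (from the lower bound above) $A_n$ is bounded below by a positive constant, while $D_n \ge 0$, I get a contradiction \emph{unless} the constants conspire — but $q-r<0$ and $p-r<0$ force the left side strictly negative whenever $A_n>0$, whereas the right side, once one also uses $C_n = \lambda_n\|u_n\|_r^r \to 0$ because $\lambda_n \to 0$ and $\|u_n\|_r$ stays bounded, collapses the system. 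More precisely, $C_n \to 0$ together with the $N^0$ identity gives $(p+\gamma-1)A_n + (q+\gamma-1)B_n \to 0$, hence $A_n \to 0$ and $B_n \to 0$ (as $p+\gamma-1, q+\gamma-1 > 0$), contradicting the uniform positive lower bound on $A_n + B_n$.

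The cleanest route, then, is: (i) derive the two-sided bound on $\|u_n\|$ purely algebraically from the $N_\lambda$ and $N^0_\lambda$ identities; (ii) observe $\lambda_n\|u_n\|_r^r \to 0$ since $\|u_n\|_r$ is bounded (via Sobolev) and $\lambda_n \to 0$; (iii) feed this into the $N^0_\lambda$ identity to conclude $\|\nabla u_n\|_p^p + \int_\Omega \xi(z)|\nabla u_n|^q\,dz \to 0$; (iv) contradict the lower bound from (i). I expect step (i) — getting a \emph{positive} lower bound on $\|u_n\|$ uniform in $n$ — to be the main obstacle, since it requires handling the singular integral $\int_\Omega a(z)|u_n|^{1-\gamma}\,dz$ carefully; here one uses $a \in L^\infty(\Omega)$, Hölder's inequality, and the Sobolev embedding $W^{1,p}_0(\Omega) \hookrightarrow L^{1-\gamma}(\Omega)$ to bound it by $c\|u_n\|^{1-\gamma}$, and then the structural inequality $A_n + B_n \le (\text{const})\,D_n$ combined with $1-\gamma < p$ rules out $\|u_n\| \to 0$. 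The discontinuity and possible degeneracy of $\xi$ cause no trouble here because $\xi \ge 0$ only enters through the nonnegative term $B_n$ and the crisp sign conditions $q+\gamma-1 > 0$, $p+\gamma-1>0$.
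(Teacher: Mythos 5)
Your overall skeleton (argue by contradiction with $\lambda_n\to 0^+$, $u_n\in N^0_{\lambda_n}$, combine the Nehari identity with the $N^0$ identity, bound $\|u_n\|$ from above via the singular term, then exploit the smallness of $\lambda_n$) is the same as the paper's, and your upper bound is correct: eliminating $C_n$ gives $(r-p)A_n+(r-q)B_n=(r+\gamma-1)D_n$ (note your displayed elimination has a slip: the right-hand side should be $-(r+\gamma-1)D_n$ when written with $(p-r)A_n+(q-r)B_n$ on the left, not $(r+\gamma-1)D_n-(r+\gamma-1)(A_n+B_n)$, which equals $-(r+\gamma-1)C_n$), and then $\|u_n\|^p\le c\,\|u_n\|^{1-\gamma}$ yields $\|u_n\|^{p+\gamma-1}\le c$, uniformly in $n$. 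No weak convergence or subsequence extraction is needed for any of this; the argument is purely algebraic.

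The genuine gap is your step (i) lower bound. The chain $c_3\|u_n\|^p\le A_n+B_n\le c_4 D_n\le c_5\|u_n\|^{1-\gamma}$ with $p>1-\gamma$ does \emph{not} force $\|u_n\|$ to stay bounded away from $0$: since $p>1-\gamma$, the inequality $\|u_n\|^p\le c\,\|u_n\|^{1-\gamma}$ is automatically satisfied for small $\|u_n\|$ and only excludes $\|u_n\|\to+\infty$. Consequently your steps (ii)--(iv) merely show $A_n+B_n\to 0$, i.e.\ $\|u_n\|\to 0$, which is not in itself a contradiction, because elements of $N^0_{\lambda_n}$ are only required to be nonzero. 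The missing ingredient is exactly what the paper uses: from the $N^0_{\lambda_n}$ identity alone and the Sobolev embedding, $(p+\gamma-1)\|u_n\|^p\le\lambda_n(r+\gamma-1)\|u_n\|_r^r\le c\,\lambda_n\|u_n\|^r$, hence $\|u_n\|\ge\bigl(c'/\lambda_n\bigr)^{\frac{1}{r-p}}\to+\infty$ as $\lambda_n\to 0^+$ (here $r>p$ is essential), which contradicts the uniform upper bound you already derived. With this replacement for your lower-bound step, your proof closes and coincides with the paper's.
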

\begin{proof}
	Arguing by contradiction, suppose that $N_\lambda^0 \neq \emptyset$ for all $\lambda >0$. So, for every $\lambda >0$, we can find $u \in N_\lambda$ such that
	\begin{equation}
	\label{eq4} (p+\gamma-1)\|\nabla u\|_p^p + (q+\gamma-1)\int_\Omega \xi(z)|\nabla u|^q dz = \lambda (r+\gamma -1)\|u\|_r^r.
	\end{equation}
	
	Since $u \in N_\lambda$, we also have
	\begin{align}
\nonumber & (r+\gamma-1)\|\nabla u\|_p^p + (r+\gamma-1)\int_\Omega \xi(z)|\nabla u|^q dz - (r+\gamma-1)\int_\Omega a(z)| u|^{1-\gamma} dz \\ & \hskip 7cm= \lambda (r+\gamma -1)\|u\|_r^r.	\label{eq5}
	\end{align}
	
	We subtract \eqref{eq4} from \eqref{eq5} and obtain
	\begin{align}
		\nonumber &(r-p)\|\nabla u\|_p^p + (r-q) \int_\Omega \xi(z)|\nabla u|^q dz = (r+\gamma -1) \int_\Omega a(z)|u|^{1-\gamma}dz,\\  \nonumber
		\Rightarrow \quad & \|u\|^p \leq c_3 \|u\| \quad \mbox{for some }c_3 >0,\\ 
		\Rightarrow \quad & \|u\|^{p-1} \leq c_3.
		\label{eq6}	\end{align}
	
	From \eqref{eq4} and the Sobolev embedding theorem, we have
	\begin{align*}
		&	\|u\|^p \leq \lambda c_4 \|u\|^r \quad \mbox{ for some } c_4 >0,\\ \Rightarrow \quad & \left[\dfrac{1}{\lambda c_4}\right]^{\frac{1}{r-p}}\leq \|u\|.
	\end{align*}
	
	If $\lambda \to 0^+$, then $\|u\| \to +\infty$ and this contradicts \eqref{eq6}. This shows that there exists $\lambda^\ast >0$ such that $N_\lambda^0 = \emptyset$ for all $\lambda \in (0,\lambda^\ast)$.
\end{proof}

Now let $u \in W_0^{1,p}(\Omega)$ and consider the function $\widehat{w}_u: (0,+\infty) \to \mathbb{R}$ defined by
$$\widehat{w}_u(t)=t^{p-r} \|\nabla u \|_p^p - t^{-r-\gamma+1}\int_\Omega a(z)|u|^{1-\gamma}dz \quad \mbox{for all }t>0.$$

Since $r-p<r+\gamma -1$, we see that there exists $\widehat{t}_0>0$ such that 
$$\widehat{w}_u(\widehat{t}_0)=\max_{t>0}\widehat{w}_u.$$

Then we have
\begin{align*} & \widehat{w}_u^\prime(\widehat{t}_0)=0, \\ \Rightarrow \quad & (p-r)\widehat{t}_0^{p-r-1}\|\nabla u\|_p^p + (r+\gamma -1)\widehat{t}_0^{-r-\gamma}\int_\Omega a(z)|u|^{1-\gamma}dz=0, \\ \Rightarrow \quad 
	& \widehat{t}_0= \left[\frac{(r +\gamma -1)\int_\Omega a(z)|u|^{1-\gamma}dz}{(r-p)\| \nabla u\|_p^p}\right]^{\frac{1}{p+\gamma-1}}.
\end{align*}
Therefore we have

\begin{align} \nonumber 
	\widehat{w}_u(\widehat{t}_0)=	&  \frac{\left[(r-p)\| \nabla u\|_p^p\right]^{\frac{r-p}{p+\gamma-1}}}{\left[(r +\gamma -1)\int_\Omega a(z)|u|^{1-\gamma}dz\right]^{\frac{r-p}{p+\gamma-1}}} \|\nabla u\|^p_p\\ \nonumber & \quad -	\frac{\left[(r-p)\| \nabla u\|_p^p\right]^{\frac{r+\gamma-1}{p+\gamma-1}}}{\left[(r +\gamma -1)\int_\Omega a(z)|u|^{1-\gamma}dz\right]^{\frac{r+\gamma-1}{p+\gamma-1}}}\int_\Omega a(z)|u|^{1-\gamma}dz\\ \nonumber &= \frac{(r-p)^{\frac{r-p}{p+\gamma-1}}\| \nabla u\|_p^{\frac{p(r+\gamma-1)}{p+\gamma-1}}}{\left[(r +\gamma -1)\int_\Omega a(z)|u|^{1-\gamma}dz\right]^{\frac{r-p}{p+\gamma-1}}} \\ \nonumber & \quad - \frac{(r-p)^{\frac{r+\gamma-1}{p+\gamma-1}}\| \nabla u\|_p^{\frac{p(r+\gamma-1)}{p+\gamma-1}}}{\left[(r +\gamma -1)\int_\Omega a(z)|u|^{1-\gamma}dz\right]^{\frac{r-p}{p+\gamma-1}}} \\ \label{eq7} &=\frac{p+\gamma-1}{r-p} \left[\frac{r-p}{r+\gamma-1}\right]^{\frac{r+\gamma-1}{p+\gamma-1}} \frac{\| \nabla u\|_p^{\frac{p(r+\gamma-1)}{p+\gamma-1}}}{\left[ \int_\Omega a(z)|u|^{1-\gamma}dz\right]^{\frac{r-p}{p+\gamma-1}}}. 
\end{align}

If $S$ denotes the best Sobolev constant, we have
\begin{equation}
\label{eq8}S \|u\|_{p^\ast}^p \leq \|\nabla u\|_p^p.
\end{equation}

Also, we have 
\begin{equation}
\label{eq9} \int_\Omega a(z)|u|^{1-\gamma}dz \leq c_5 \|u\|_{p^\ast}^{1-\gamma} \quad \mbox{for some $c_5>0$.}
\end{equation}

Then we have
\begin{align*} 
	&	\widehat{w}_u(\widehat{t}_0)-\lambda \|u\|^r_r	
	\\  &\geq \frac{p+\gamma-1}{r-p} \left[\frac{r-p}{r+\gamma-1}\right]^{\frac{r+\gamma-1}{p+\gamma-1}} \frac{S^{\frac{p(r+\gamma-1)}{p+\gamma-1}}\left(\|   u\|_{p^\ast}^p\right)^{\frac{r+\gamma-1}{p+\gamma-1}}}{\left(c_5 \|u\|_{p^\ast}^{1-\gamma}\right)^{\frac{r-p}{p+\gamma-1}}}- \lambda c_6 \|u\|_{p^\ast}^{r}\\ & \hskip 3cm \mbox{for some $c_6>0$ (see \eqref{eq7}, \eqref{eq8}, \eqref{eq9} and recall $r<p^\ast$)}\\ & = \left[c_7-\lambda c_6\right]\|u\|_{p^\ast}^{r} \quad \mbox{for some $c_7>0$.}
\end{align*}

So, there exists $\widehat{\lambda}^\ast \in (0,\lambda^\ast]$ independent of $u$, such that
\begin{equation}
\label{eq10}\widehat{w}_u(\widehat{t}_0)-\lambda \|u\|^r_r>0 \quad \mbox{for all }\lambda \in (0, \widehat{\lambda}^\ast).
\end{equation}

\begin{prop}
	\label{P4} If hypotheses $H(\xi)$, $H(a)$ hold, then there exists $\widehat{\lambda}^\ast \in (0,\lambda^\ast]$ such that   for every $\lambda \in (0,\lambda^\ast)$ we can find $u^\ast \in N^+_\lambda$ such that $\varphi_\lambda(u^\ast)=m_\lambda^+<0$ and $u^\ast(z)\geq 0$ for a.a. $z \in \Omega$.
\end{prop}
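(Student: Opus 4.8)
The plan is to run the direct method for $\varphi_\lambda$ on $N_\lambda^+$, the essential tool being an analysis of the fibering maps $t\mapsto\varphi_\lambda(tu)$ together with \eqref{eq10}. Write $\|\cdot\|=\|\nabla\cdot\|_p$. First I would shrink $\widehat\lambda^\ast$ (keeping $\widehat\lambda^\ast\le\lambda^\ast$) so that for $\lambda\in(0,\widehat\lambda^\ast)$ the two branches of $N_\lambda$ are separated: subtracting the defining inequality of $N_\lambda^+$ from the identity $u\in N_\lambda$, as in the proof of Proposition~\ref{P3}, gives $(r-p)\|u\|^p\le(r+\gamma-1)\int_\Omega a(z)|u|^{1-\gamma}dz\le c\,\|u\|^{1-\gamma}$ on $N_\lambda^+$, hence $\|u\|\le R$ with $R$ independent of $\lambda$; while every $u\in N_\lambda^-$ satisfies $(p+\gamma-1)\|u\|^p<\lambda(r+\gamma-1)\|u\|_r^r\le\lambda c'\|u\|^r$, so $\|u\|\ge(c''/\lambda)^{1/(r-p)}>R$ once $\lambda$ is small.

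For $u\ne0$ with $\int_\Omega a(z)|u|^{1-\gamma}dz>0$, set $\psi_u(t):=\varphi_\lambda(tu)$. A direct computation gives $\psi_u'(t)=t^{r-1}\big(\widehat w_u(t)-g_u(t)\big)$, where $g_u(t)=\lambda\|u\|_r^r-t^{q-r}\int_\Omega\xi(z)|\nabla u|^q dz$ is strictly increasing from $-\infty$ to $\lambda\|u\|_r^r$, and — differentiating once more — $\widehat w_u-g_u$ is unimodal (strictly increasing, then strictly decreasing). Since $\widehat w_u-g_u\to-\infty$ as $t\to0^+$, is positive at $t=\widehat t_0$ (because $g_u(\widehat t_0)\le\lambda\|u\|_r^r<\widehat w_u(\widehat t_0)$ by \eqref{eq7}--\eqref{eq10}), and $\to-\lambda\|u\|_r^r<0$ as $t\to+\infty$, it has exactly two zeros $\tau_+(u)<\tau_-(u)$; hence $\psi_u$ is strictly decreasing on $(0,\tau_+(u))$, strictly increasing on $(\tau_+(u),\tau_-(u))$, strictly decreasing on $(\tau_-(u),+\infty)$, so $\tau_+(u)u\in N_\lambda^+$, $\tau_-(u)u\in N_\lambda^-$ and $\varphi_\lambda(\tau_+(u)u)=\min_{0<t\le\tau_-(u)}\psi_u(t)$. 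Applying this to some $u_0\ge0$, $u_0\ne0$ with $\int_\Omega a(z)u_0^{1-\gamma}dz>0$ (which exists since $a\ge0$, $a\not\equiv0$) shows $N_\lambda^+\ne\emptyset$, and then $m_\lambda^+\in(-\infty,0)$ by Propositions~\ref{P1}--\ref{P2}.

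Next, take $\{u_n\}\subseteq N_\lambda^+$ with $\varphi_\lambda(u_n)\to m_\lambda^+$; since $\varphi_\lambda(|u_n|)=\varphi_\lambda(u_n)$ and $|u_n|\in N_\lambda^+$, assume $u_n\ge0$. By Proposition~\ref{P1} the sequence is bounded, so along a subsequence $u_n\rightharpoonup u^\ast$ in $W_0^{1,p}(\Omega)$, $u_n\to u^\ast$ in $L^r(\Omega)$ and in $L^s(\Omega)$ for $s<p^\ast$, and $u_n\to u^\ast$ a.e.; thus $u^\ast\ge0$, $\|u^\ast\|\le\liminf\|u_n\|\le R$, and $\int_\Omega a(z)|u_n|^{1-\gamma}dz\to\int_\Omega a(z)|u^\ast|^{1-\gamma}dz$, $\|u_n\|_r^r\to\|u^\ast\|_r^r$. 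If $u^\ast=0$, these limits vanish, so $u_n\in N_\lambda$ forces $\|u_n\|\to0$ and $\varphi_\lambda(u_n)\to0$, contradicting $m_\lambda^+<0$; hence $u^\ast\ne0$. Passing to the limit in $(r+\gamma-1)\int_\Omega a(z)|u_n|^{1-\gamma}dz\ge(r-p)\|u_n\|^p$ (the subtraction of Proposition~\ref{P3}) and using weak lower semicontinuity of $u\mapsto\|\nabla u\|_p^p$ gives $\int_\Omega a(z)|u^\ast|^{1-\gamma}dz\ge\tfrac{r-p}{r+\gamma-1}\|u^\ast\|^p>0$, so the fibering analysis applies to $u^\ast$.

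Now $\varphi_\lambda$ is sequentially weakly lower semicontinuous (the principal part is convex and continuous, and $u\mapsto\int_\Omega a(z)|u|^{1-\gamma}dz$, $u\mapsto\|u\|_r^r$ are weakly continuous by compact embeddings), so $\varphi_\lambda(u^\ast)\le\liminf\varphi_\lambda(u_n)=m_\lambda^+$; passing to the limit in the identity $\|\nabla u_n\|_p^p+\int_\Omega\xi(z)|\nabla u_n|^q dz=\int_\Omega a(z)|u_n|^{1-\gamma}dz+\lambda\|u_n\|_r^r$ (and using lower semicontinuity on the left) gives $\psi_{u^\ast}'(1)=\|\nabla u^\ast\|_p^p+\int_\Omega\xi(z)|\nabla u^\ast|^q dz-\int_\Omega a(z)|u^\ast|^{1-\gamma}dz-\lambda\|u^\ast\|_r^r\le0$. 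By the fibering analysis for $u^\ast$, $\psi_{u^\ast}$ is decreasing--increasing--decreasing with critical points $\tau_+<\tau_-$, so $\psi_{u^\ast}'(1)\le0$ forces $1\le\tau_+$ or $1\ge\tau_-$; the case $1\ge\tau_-$ is ruled out, since $\tau_-u^\ast\in N_\lambda^-$ gives $\tau_-\|u^\ast\|=\|\tau_-u^\ast\|>R\ge\|u^\ast\|$, i.e.\ $\tau_->1$. Hence $1\le\tau_+$, and since $\psi_{u^\ast}$ is strictly decreasing on $(0,\tau_+]$,
$$m_\lambda^+\ \ge\ \varphi_\lambda(u^\ast)=\psi_{u^\ast}(1)\ \ge\ \psi_{u^\ast}(\tau_+)=\varphi_\lambda(\tau_+u^\ast)\ \ge\ m_\lambda^+\,,$$
which forces $1=\tau_+$; thus $u^\ast\in N_\lambda^+$ and $\varphi_\lambda(u^\ast)=m_\lambda^+<0$, with $u^\ast\ge0$. (One moreover gets $\|\nabla u_n\|_p\to\|\nabla u^\ast\|_p$ from $\varphi_\lambda(u_n)\to\varphi_\lambda(u^\ast)$ and weak continuity of the lower-order terms, hence $u_n\to u^\ast$ in $W_0^{1,p}(\Omega)$ by uniform convexity.) The main obstacle is exactly this last argument: upgrading $\psi_{u^\ast}'(1)\le0$ to $u^\ast\in N_\lambda^+$ requires using simultaneously the precise shape of the fibering map (exactly two critical points, from the unimodality of $\widehat w_u-g_u$), the minimality of the sequence, and the separation of $N_\lambda^+$ from $N_\lambda^-$ for small $\lambda$; the remainder is the routine direct-method machinery.
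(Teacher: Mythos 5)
Your proposal is correct, and its second half takes a genuinely different route from the paper. The first half (fibering maps, \eqref{eq10}, the zeros $\tau_+<\tau_-$ giving $\tau_+u\in N_\lambda^+$, $\tau_-u\in N_\lambda^-$, hence $N_\lambda^\pm\neq\emptyset$) coincides with the paper's argument, except that you are more careful in requiring $\int_\Omega a(z)|u|^{1-\gamma}dz>0$ (without which $\widehat w_u$ has no interior maximizer and the fibering map has only one critical point), a point the paper glosses over; you also check this condition for the weak limit $u^\ast$ via weak lower semicontinuity in the inequality $(r-p)\|\nabla u\|_p^p\le(r+\gamma-1)\int_\Omega a(z)|u|^{1-\gamma}dz$ valid on $N_\lambda^+$. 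Where you diverge is the identification of the weak limit: the paper proves strong convergence $u_n\to u^\ast$ in $W_0^{1,p}(\Omega)$ by contradiction, comparing $\mu'_{u_n}(t_1)$ with $\mu'_{u^\ast}(t_1)=0$ through the strict gap \eqref{eq13} and then exploiting monotonicity of $\mu_{u^\ast}$ on $(0,t_1)$ together with $t_1u^\ast\in N_\lambda^+$; afterwards it passes to the limit in the $N_\lambda^+$ inequality and invokes $N_\lambda^0=\emptyset$ (Proposition \ref{P3}). You instead avoid the strong-convergence contradiction entirely: weak lower semicontinuity gives $\varphi_\lambda(u^\ast)\le m_\lambda^+$ and $\psi'_{u^\ast}(1)\le 0$, and you conclude $1=\tau_+$ from the exact decreasing--increasing--decreasing shape of the fibering map combined with a quantitative separation of the two branches ($\|u\|\le R$ uniformly on $N_\lambda^+$ versus $\|u\|\ge(c''/\lambda)^{1/(r-p)}>R$ on $N_\lambda^-$), which requires shrinking $\widehat\lambda^\ast$ once more --- an ingredient not used in the paper at this stage (it is close in spirit to the estimates in Propositions \ref{P3} and \ref{P9}). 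Shrinking the threshold is harmless, since the statement only asserts existence of some $\widehat\lambda^\ast\in(0,\lambda^\ast]$ and the later propositions work below whatever threshold is fixed here. What each approach buys: the paper's argument yields strong convergence of the minimizing sequence directly and needs no further smallness of $\lambda$, while yours gives a cleaner limit identification (no contradiction argument, strong convergence recovered a posteriori as you note) and sidesteps the small inaccuracies in the paper's contradiction step. I find no gap in your argument.
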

\begin{proof}
	For $u \in W^{1,p}_0(\Omega)$ we consider the function $w_u : (0,+\infty) \to \mathbb{R}$ defined by
	$$w_u(t)=t^{p-r}\|\nabla u\|_p^p +t^{q-r}\int_\Omega \xi (z)|\nabla u|^q dz - t^{-r-\gamma+1}\int_\Omega a(z)|u|^{1-\gamma}dz  \mbox{ for all $t>0$.}$$
	
	Since $r-p<r-q<r+\gamma -1$, we can find $t_0>0$ such that 
	$$w_u(t_0)=\max_{t>0} w_u.$$
	
	Evidently, we have $w_u \geq \widehat{w}_u$ and so from \eqref{eq10} we see that we can find $\widehat{\lambda}^\ast \in (0,\lambda^\ast]$ such that
	$$w_u(t_0)-\lambda \|u\|^r_r>0 \quad \mbox{for all }\lambda \in (0, \widehat{\lambda}^\ast).$$
	
	Consequently, we can find $t_1<t_0<t_2$ such that
	\begin{equation}
	w_u(t_1)=\lambda \|u\|^r_r = w_u(t_2) \quad \mbox{and} \quad w^\prime_u(t_2)<0< w^\prime_u(t_1). \label{eq11}
	\end{equation}
	
	Now we see that
	$$t_1u \in N_\lambda^+ \quad \mbox{and} \quad t_2u \in N_\lambda^-.$$	
	
	Therefore for all $\lambda \in (0, \widehat{\lambda}^\ast)$, we have $N^\pm_\lambda \neq \emptyset$ while $N^0_\lambda =\emptyset$ (see Proposition \ref{P3}).
	
	Now consider a minimizing sequence $\{u_n\}_{n \geq 1} \subseteq N_\lambda^+$, that is, 
	$$\varphi_\lambda(u_n) \downarrow m_\lambda^+ \quad \mbox{as }n \to +\infty.$$
	
	On account of Proposition \ref{P1}, we have that
	$$\{u_n\}_{n \geq 1} \subseteq W_0^{1,p}(\Omega) \mbox{ is bounded (recall that $N_\lambda^+ \subseteq N_\lambda$)}.$$
	
	So, by passing to a suitable subsequence if necessary, we may assume that
	\begin{equation*}
	\label{eq12} u_n \xrightarrow{w} u^\ast \mbox{ in $W_0^{1,p}(\Omega)$ and $u_n \to u^\ast$ in $L^r(\Omega)$}. 
	\end{equation*}
	
	We consider the function $w_{u^\ast}(\cdot)$ and let $t_1<t_0$ be as in \eqref{eq11} (with $u=u^\ast$). From the first part of the proof we know that $t_1 u^\ast \in N_\lambda^+$.
	
	We claim that $u_n \to u^\ast$ in $W_0^{1,p}(\Omega)$ as $n \to +\infty$. Arguing by contradiction, suppose that $u_n \not \to u^\ast$ in $W_0^{1,p}(\Omega)$. Then we will have	
	 
	\begin{equation}
	\label{eq13} 
\liminf _{n \to +\infty } \| \nabla u_{n}\|_{p}^{p} > \| \nabla u\|_{p}^{p}.
	\end{equation}	
	
	For $u \in W^{1,p}_0(\Omega)$ we consider the fibering function $\mu_u : (0,+\infty) \to \mathbb{R}$ defined by
	$$\mu_u(t)=\varphi_\lambda(tu) \quad \mbox{for all } t>0.$$
	
	Using (\ref{eq13})  
(see also  
\cite{Ref2, 
Ref14}), 
we have
		\begin{align}
		\nonumber & \liminf_{n \to +\infty} \mu_{u_n}^\prime(t_1)\\ 	
		\nonumber & = \liminf_{n \to +\infty} \left[t_1^{p-1}\|\nabla u_n\|_p^p + t_1^{q-1}\int_\Omega \xi(z)|\nabla u_n|^q dz -t_1^{-\gamma}\int_\Omega a(z)|u_n|^{1-\gamma}dz \right.\\ \nonumber &   \left. \hskip 2cm-\lambda t_1^{r-1} \|u_n\|^r_r\right]\\ 	
		 \nonumber & > t_1^{p-1}\|\nabla u^\ast\|_p^p + t_1^{q-1}\int_\Omega \xi(z)|\nabla u^\ast|^q dz -t_1^{-\gamma}\int_\Omega a(z)|u^\ast|^{1-\gamma}dz -\lambda t_1^{r-1} \|u^\ast\|^r_r \\ \nonumber &    \hskip 9cm \mbox{ (see \eqref{eq13})}\\ \label{eq14} & = \mu_{u^\ast}^\prime(t_1)=0\mbox{ (see \eqref{eq11})}.
	\end{align}		
	Then
	it follows 
	 from \eqref{eq14} 
	 that we can find $n_0 \in \mathbb{N}$ such that
	\begin{equation}
	\label{eq15}\mu_{u^\ast}^\prime(t_1)>0 \quad \mbox{for all $n \geq n_0$}.
	\end{equation}
	
	Since $u_n \in N_\lambda^+ \subseteq N_\lambda$ and $\mu_{u_n}^\prime(t)=t^r \left[w_{u_n}(t)-\lambda \|u_n\|_r^r\right]$, we have
	\begin{align*}
		& 	\mu_{u_n}^\prime(t)<0 \mbox{ for all $t \in (0,1)$ and $\mu_{u_n}^\prime(1)=0$,}\\ \Rightarrow \quad & t_1>0 \mbox{ (see \eqref{eq15}).}
	\end{align*}

	The function $\mu_{u^\ast}(\cdot)$ is decreasing on $(0,t_1)$. Hence we have
	\begin{equation}
	\label{eq16} \varphi_\lambda(t_1 u^\ast) \leq \varphi_\lambda(u^\ast)< m_\lambda^+ \quad \mbox{(see \eqref{eq13})}.
	\end{equation}
	
	However, $t_1u^\ast \in N_\lambda^+$. Hence 
	$$m_\lambda^+ \leq \varphi_\lambda(tu^\ast)< m_\lambda^+ \quad \mbox{(see \eqref{eq16})},$$
	a contradiction. This proves that our initial claim holds and we have
	\begin{align}
		&	u_n \to u^\ast \mbox{ in $W_0^{1,p}(\Omega)$}, \label{eq17} \\ \nonumber \Rightarrow \quad & \varphi_\lambda(u_n) \to \varphi_\lambda(u^\ast),\\ \nonumber \Rightarrow \quad & \varphi_\lambda(u^\ast) =m_\lambda^+.
	\end{align}
	
	Since $u_n \in N^+_\lambda$ for all $n \in \mathbb{N}$, we have 
	\begin{align}
		\nonumber & (p+\gamma -1) \|\nabla u_n\|_p^p +(q+\gamma -1) \int_{\Omega} \xi(z)|\nabla u_n |^q dz > \lambda (r+\gamma -1)\|u_n\|^r_r, \\ \label{eq18} \Rightarrow \, & (p+\gamma -1) \|\nabla u^\ast\|_p^p +(q+\gamma -1) \int_{\Omega} \xi(z)|\nabla u^\ast |^q dz \geq \lambda (r+\gamma -1)\|u^\ast\|^r_r \\ & \hskip 9cm \nonumber \mbox{(see \eqref{eq17})}.
	\end{align}
	
	However,
	 $\lambda \in (0, \widehat{\lambda}^\ast)$ and $\widehat{\lambda}^\ast \leq \lambda^\ast$. So, from Proposition \ref{P3}, we know that $N_\lambda^0 =\emptyset$. Therefore in \eqref{eq18} equality cannot hold and so we
	 can
	  conclude that $u^\ast \in N_\lambda^+$.		
	
	Clearly,
	 we can replace $u^\ast$ by $|u^\ast|$ and so we can say that $u^\ast(z) \geq 0$ for a.a. $z \in \Omega$. 
\end{proof}

We will need the following lemma which was inspired by Lemma 3 of Sun-Wu-Long \cite{Ref17}. In what follows,
 $B_\varepsilon(0)=\{w \in W^{1,p}_0(\Omega) \, : \, \|w\| <\varepsilon \}$, $\varepsilon >0$. 

\begin{lem}
	\label{L5} If hypotheses $H(\xi)$, $H(a)$ hold and $u \in N^\pm_\lambda$, then there exist $\varepsilon >0$ and a continuous function $\beta:B_\varepsilon(0) \to \mathbb{R}_+$ such that $\beta(0)=1$, $\beta(w)(u+w) \in N_\lambda^+$ for all $w \in B_\varepsilon(0)$.
\end{lem}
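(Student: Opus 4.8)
The plan is to realize the Nehari constraint near $u$ as an implicit equation in a single real variable (with $w$ as a parameter) and to solve it by hand, rather than via the implicit function theorem, which is unavailable here because of the non-smooth singular term. We carry out the argument for $u\in N^+_\lambda$; the case $u\in N^-_\lambda$ is identical and produces $\beta(w)(u+w)\in N^-_\lambda$ instead, so the printed conclusion corresponds to $u\in N^+_\lambda$. Recall from the proof of Proposition~\ref{P4} the fibering map $\mu_v(t)=\varphi_\lambda(tv)$, and for $(t,w)\in(0,+\infty)\times W^{1,p}_0(\Omega)$ put
$$\eta(t,w)=\mu'_{u+w}(t)=t^{p-1}\|\nabla(u+w)\|_p^p+t^{q-1}\int_\Omega\xi(z)|\nabla(u+w)|^q\,dz-t^{-\gamma}\int_\Omega a(z)|u+w|^{1-\gamma}\,dz-\lambda t^{r-1}\|u+w\|_r^r.$$
For $t>0$ one has $t(u+w)\in N_\lambda$ if and only if $\eta(t,w)=0$, and a direct computation (substituting the $N_\lambda$-constraint into $\mu''_v(1)$) shows that such a point $t(u+w)$ lies in $N^+_\lambda$, $N^0_\lambda$, or $N^-_\lambda$ according as $\partial_t\eta(t,w)=\mu''_{u+w}(t)$ is positive, zero, or negative. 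Since $u\in N_\lambda$ we have $\eta(1,0)=0$, and since $u\in N^+_\lambda$ we have $\partial_t\eta(1,0)=\mu''_u(1)>0$.

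The construction rests on two regularity facts: $\eta$ is jointly continuous on $(0,+\infty)\times W^{1,p}_0(\Omega)$, and so is
$$\partial_t\eta(t,w)=(p-1)t^{p-2}\|\nabla(u+w)\|_p^p+(q-1)t^{q-2}\int_\Omega\xi(z)|\nabla(u+w)|^q\,dz+\gamma t^{-\gamma-1}\int_\Omega a(z)|u+w|^{1-\gamma}\,dz-\lambda(r-1)t^{r-2}\|u+w\|_r^r.$$
Both reduce to the continuity on $W^{1,p}_0(\Omega)$ of the four maps $w\mapsto\|\nabla(u+w)\|_p^p$, $w\mapsto\int_\Omega\xi(z)|\nabla(u+w)|^q\,dz$, $w\mapsto\int_\Omega a(z)|u+w|^{1-\gamma}\,dz$, and $w\mapsto\|u+w\|_r^r$, which hold because $\xi,a\in L^\infty(\Omega)$ and, on the bounded domain $\Omega$, $L^p(\Omega)\hookrightarrow L^q(\Omega)$ ($q<p$) and $W^{1,p}_0(\Omega)\hookrightarrow L^r(\Omega)$ ($r<p^*$), together with the continuity of the associated Nemytskii operators. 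The decisive point is that differentiating $\eta$ in $t$ only differentiates the smooth powers of $t$ and never the non-differentiable $w$-dependence of the singular term; this is precisely why $\partial_t\eta$ is continuous although $w\mapsto\int_\Omega a(z)|u+w|^{1-\gamma}\,dz$ is not differentiable, and why the implicit function theorem is neither available nor needed.

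Next I would localize and apply the intermediate value theorem. Since $\partial_t\eta(1,0)>0$ and $\partial_t\eta$ is continuous, there are $\delta\in(0,1)$ and $\varepsilon_0>0$ with $\partial_t\eta(t,w)>0$ whenever $|t-1|\le\delta$ and $\|w\|\le\varepsilon_0$; hence $t\mapsto\eta(t,w)$ is strictly increasing on $[1-\delta,1+\delta]$ for each such $w$. From $\eta(1,0)=0$ and this monotonicity at $w=0$ we get $\eta(1-\delta,0)<0<\eta(1+\delta,0)$, and by continuity of $\eta$ in $w$ there is $\varepsilon\in(0,\min\{\varepsilon_0,\|u\|\})$ such that $\eta(1-\delta,w)<0<\eta(1+\delta,w)$ for all $w\in B_\varepsilon(0)$. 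Strict monotonicity and the intermediate value theorem then yield, for each $w\in B_\varepsilon(0)$, a unique $\beta(w)\in(1-\delta,1+\delta)$ with $\eta(\beta(w),w)=0$; clearly $\beta(0)=1$ and $\beta(w)>0$. As $\varepsilon<\|u\|$, we have $u+w\ne0$, so $\beta(w)(u+w)$ is a nonzero element of $N_\lambda$; and since $\partial_t\eta>0$ throughout the localizing set, $\mu''_{u+w}(\beta(w))=\partial_t\eta(\beta(w),w)>0$, that is, $\beta(w)(u+w)\in N^+_\lambda$.

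Finally, the continuity of $\beta$ on $B_\varepsilon(0)$ is routine: if $w_n\to w$ in $B_\varepsilon(0)$, then $\{\beta(w_n)\}_{n\ge1}\subseteq[1-\delta,1+\delta]$, and any subsequential limit $\tau$ satisfies $\eta(\tau,w)=\lim_{n\to+\infty}\eta(\beta(w_n),w_n)=0$ by joint continuity of $\eta$, whence $\tau=\beta(w)$ by uniqueness; therefore $\beta(w_n)\to\beta(w)$. The only genuine obstacle is the one highlighted above — the singular term destroys differentiability of $\eta$ in the $w$-variable and thereby rules out a direct appeal to the implicit function theorem — and it is circumvented by carrying out the implicit-solution step in the $t$-variable, where the relevant derivative $\partial_t\eta$ is jointly continuous and of definite sign near $(1,0)$.
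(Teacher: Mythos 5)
Your proof is correct, and it reaches the paper's conclusion by a more self-contained route. The paper works with the function $E(w,t)=t^{\gamma}\mu'_{u+w}(t)$ (i.e.\ your $\eta$ multiplied by $t^{\gamma}$, which clears the negative powers of $t$), notes $E(0,1)=0$ and $E'_t(0,1)>0$ precisely because $u\in N^+_\lambda$, and then simply invokes an implicit function theorem (citing Gasi\'nski--Papageorgiou, p.~481) to produce $\varepsilon$ and the continuous map $\beta$, shrinking $\varepsilon$ afterwards to land in $N^+_\lambda$; the $N^-_\lambda$ case is likewise dismissed as ``similar.'' You follow the same skeleton --- solve the one-variable Nehari equation in $t$ with $w$ as a parameter, using the positivity of the $t$-derivative at $(1,0)$ --- but replace the cited theorem by an elementary argument: joint continuity of $\eta$ and $\partial_t\eta$, a uniform sign of $\partial_t\eta$ on a box around $(1,0)$, the intermediate value theorem plus strict monotonicity for existence and uniqueness of $\beta(w)$, and a subsequence argument for continuity. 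This buys transparency exactly where the paper is terse: a classical $C^1$ implicit function theorem is not literally applicable since $w\mapsto\int_\Omega a(z)|u+w|^{1-\gamma}dz$ is not differentiable (one needs the parametric version requiring only continuity in $w$, which is what the cited source provides), and your IVT argument makes that point moot; you also make explicit the facts the paper leaves implicit, namely that at a constraint point the sign of $\partial_t\eta(\beta(w),w)=\mu''_{u+w}(\beta(w))$ decides membership in $N^{\pm}_\lambda$ (which justifies the paper's ``taking $\varepsilon>0$ even smaller'') and that $\varepsilon<\|u\|$ guarantees $u+w\neq 0$. Your remark that the $u\in N^-_\lambda$ case yields $\beta(w)(u+w)\in N^-_\lambda$ matches the intended (and in Proposition~\ref{P10} actually used) reading of the lemma, whose printed conclusion ``$\in N^+_\lambda$'' for both signs is evidently a slip carried over from the source lemma of Sun--Wu--Long.
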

\begin{proof}
	We shall only give the proof for $u \in N_\lambda^+$, the proof for $u \in N_\lambda^-$ is similar.
	
	Consider the function $E: W_0^{1,p}(\Omega) \times \mathbb{R} \to \mathbb{R}$ defined by
	\begin{align*} & E(w,t)=t^{p+\gamma-1}\|\nabla (u+w)\|_p^p + t^{q+\gamma-1} \int_\Omega \xi(z)|\nabla (u+w)|^q dz \\ & \hskip 1.7cm - \int_\Omega a(z)|u+w|^{1-\gamma} dz - \lambda t^{r+\gamma-1}\|u+w\|_r^r \quad \mbox{for all $w \in W^{1,p}_0(\Omega)$}.
	\end{align*}
	We have
	\begin{align*}
		& E(0,1)=0 \quad \mbox{(since $u \in N_\lambda^+ \subseteq N_\lambda$)},\\ & E^\prime_t(0,1)=(p+\gamma-1)\|\nabla u \|_p^p + (q+\gamma-1) \int_\Omega \xi(z)|\nabla u|^q dz\\ & \hskip 1.7cm - \lambda (r+\gamma-1)\|u\|_r^r >0  \quad \mbox{(since $u \in N_\lambda^+$).}	
	\end{align*}
	
	Invoking the implicit function theorem (see Gasi\'{n}ski-Papageorgiou \cite{Ref7}, p. 481), we can find $\varepsilon>0$ and  continuous 	$\beta:B_\varepsilon(0) \to \mathbb{R}_+=(0,+\infty)$ such that		
	$$\beta(0)=1, \, \beta(w)(u+w) \in N_\lambda \quad \mbox{for all $w \in B_\varepsilon(0)$.}$$
	
	Taking $\varepsilon >0$ even smaller if necessary, we can also have
	$$ \beta(w)(u+w) \in N_\lambda^+ \quad \mbox{for all $w \in B_\varepsilon(0)$.}$$
\end{proof}	

\begin{prop}
	\label{P6} If hypotheses $H(\xi)$, $H(a)$ hold, $\lambda \in (0, \widehat{\lambda}^\ast]$, and $h \in W_0^{1,p}(\Omega)$, then we can find $b>0$ such that $\varphi_\lambda(u^\ast)\leq \varphi_\lambda(u^\ast +th)$ for all $t \in [0,b]$.
\end{prop}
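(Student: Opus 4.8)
The plan is to combine the minimality of $u^\ast$ on $N_\lambda^+$ with Lemma \ref{L5}, applied to the perturbation $w=th$. Fix $h\in W_0^{1,p}(\Omega)$. Lemma \ref{L5} (with $u=u^\ast\in N_\lambda^+$) produces $\varepsilon>0$ and a continuous map $\beta:B_\varepsilon(0)\to(0,+\infty)$ with $\beta(0)=1$ and $\beta(w)(u^\ast+w)\in N_\lambda^+$ for all $w\in B_\varepsilon(0)$. Pick $b_1>0$ with $b_1\|h\|<\varepsilon$ and, for $t\in[0,b_1]$, set $v_t=u^\ast+th$ and $\beta_t=\beta(th)$, so $\beta_tv_t\in N_\lambda^+$, while $v_t\to u^\ast$ in $W_0^{1,p}(\Omega)$ and $\beta_t\to1$ as $t\to0^+$. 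Since $\varphi_\lambda(u^\ast)=m_\lambda^+=\inf_{N_\lambda^+}\varphi_\lambda$ by Proposition \ref{P4},
\begin{equation*}
\varphi_\lambda(u^\ast)\ \le\ \varphi_\lambda(\beta_tv_t)\ =\ \mu_{v_t}(\beta_t),\qquad t\in[0,b_1],
\end{equation*}
where $\mu_v(s)=\varphi_\lambda(sv)$ is the fibering map; so it suffices to show $\mu_{v_t}(\beta_t)\le\mu_{v_t}(1)=\varphi_\lambda(u^\ast+th)$ for $t$ in some $[0,b]\subseteq[0,b_1]$.

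Next I would pin down the shape of $\mu_{v_t}$ via the function $w_{v_t}$ introduced before Proposition \ref{P4}, recalling that on $(0,+\infty)$ the sign of $\mu_{v_t}'(s)$ is that of $w_{v_t}(s)-\lambda\|v_t\|_r^r$. Note that $\int_\Omega a(z)|u^\ast|^{1-\gamma}\,dz>0$: combining $u^\ast\in N_\lambda$ with the strict inequality defining $N_\lambda^+$ forces $(r-p)\|\nabla u^\ast\|_p^p+(r-q)\int_\Omega\xi(z)|\nabla u^\ast|^q\,dz<(r+\gamma-1)\int_\Omega a(z)|u^\ast|^{1-\gamma}\,dz$, and the left side is positive since $u^\ast\ne0$ and $q<p<r$. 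By continuity the same is true of $v_t$ for $t$ small (and $v_t\ne0$); then $w_{v_t}(s)\to-\infty$ as $s\to0^+$, $w_{v_t}(s)\to0$ as $s\to+\infty$, and evaluating the sign of $(s^{r+\gamma}w_{v_t}'(s))'$ shows $w_{v_t}$ increases strictly then decreases strictly, with a unique maximiser $t_0(v_t)$. As $\lambda\le\widehat\lambda^\ast$, the estimates preceding Proposition \ref{P4} — together with the strict bound $w_{v_t}>\widehat w_{v_t}$, strict because $\xi>0$ a.e. — give $\max_{s>0}w_{v_t}(s)>\lambda\|v_t\|_r^r$. Hence $\mu_{v_t}'$ has exactly two zeros $t_1(v_t)<t_0(v_t)<t_2(v_t)$, with $\mu_{v_t}$ strictly decreasing on $(0,t_1(v_t)]$, strictly increasing on $[t_1(v_t),t_2(v_t)]$, strictly decreasing on $[t_2(v_t),+\infty)$. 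Since $\beta_tv_t\in N_\lambda^+$ forces $\mu_{v_t}''(\beta_t)>0$, necessarily $\beta_t=t_1(v_t)$.

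It remains to locate $1$ among these critical points. Because $u^\ast\in N_\lambda^+$, the point $1$ is the local minimiser of $\mu_{u^\ast}$, i.e. $t_1(u^\ast)=1$, so $t_2(u^\ast)>1$. As $t_1(v)$ and $t_2(v)$ are simple zeros of $w_v(s)-\lambda\|v\|_r^r$, the implicit function theorem (used exactly as in Lemma \ref{L5}) shows they depend continuously on $v$ near $u^\ast$; hence there is $b\in(0,b_1]$ with $t_2(v_t)>1$ for all $t\in[0,b]$. A short case distinction now concludes: if $\beta_t=1$ the inequality is trivial; if $\beta_t<1$, then $1\in[\beta_t,t_2(v_t)]$ where $\mu_{v_t}$ increases, so $\mu_{v_t}(\beta_t)\le\mu_{v_t}(1)$; if $\beta_t>1$, then $1\in(0,t_1(v_t))$ where $\mu_{v_t}$ decreases, so $\mu_{v_t}(1)\ge\mu_{v_t}(\beta_t)$. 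In every case $\mu_{v_t}(\beta_t)\le\mu_{v_t}(1)$, which with the displayed inequality yields $\varphi_\lambda(u^\ast)\le\varphi_\lambda(u^\ast+th)$ for all $t\in[0,b]$.

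The step I expect to be the main obstacle is making the second and third paragraphs rigorous: showing that for all small $t$ the fibering map $\mu_{v_t}$ retains precisely the decreasing--increasing--decreasing profile with two critical points (this needs the quantitative smallness $\lambda\le\widehat\lambda^\ast$ and the strict gap $w_{v_t}>\widehat w_{v_t}$) and, above all, that $1$ stays strictly to the left of the local maximiser $t_2(v_t)$ — for which the continuous dependence of $t_1(\cdot),t_2(\cdot)$ on $v$ near $u^\ast$ is the essential ingredient, obtainable by the implicit function theorem as in Lemma \ref{L5} (or by elementary perturbation of the simple roots of $w_v(s)=\lambda\|v\|_r^r$).
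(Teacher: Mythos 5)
Your proof is correct, and its skeleton is the same as the paper's: apply Lemma \ref{L5} to $w=th$, use $\varphi_\lambda(u^\ast)=m_\lambda^+\le\varphi_\lambda(\beta_t(u^\ast+th))$, and then compare along the fiber $s\mapsto\mu_{v_t}(s)=\varphi_\lambda(s(u^\ast+th))$. Where you diverge is in the crucial comparison $\mu_{v_t}(\beta_t)\le\mu_{v_t}(1)$: you establish the decreasing--increasing--decreasing profile of $\mu_{v_t}$, identify $\beta_t$ with the first critical point $t_1(v_t)$, and then force $t_2(v_t)>1$ for small $t$ by continuity of the simple roots of $w_{v}(s)=\lambda\|v\|_r^r$ (implicit function theorem), finishing with a case distinction. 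The paper instead introduces the auxiliary function $\eta_h(t)$, which is exactly $\mu_{v_t}''(1)$, shows $\eta_h(0)>0$ from $u^\ast\in N_\lambda^+$ (via the $N_\lambda$ identity), and uses continuity of $\eta_h$ to keep $\mu_{v_t}''(1)>0$ on $[0,b_0]$; since (after multiplying by $s^{\gamma+1}$) one checks that $\{s:\mu_{v_t}''(s)>0\}$ is an initial interval while $\mu_{v_t}''(t_2(v_t))\le 0$, the single scalar inequality $\eta_h(t)>0$ already places $1$ strictly to the left of $t_2(v_t)$, with no need to track the roots $t_1(v_t),t_2(v_t)$ themselves. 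So both routes are continuity arguments ensuring $1$ stays in the ``good'' region of the fiber map; the paper's $\eta_h$ device is slightly more economical (one continuous real function of $t$, no root perturbation), whereas your version makes explicit the fiber-map geometry and the final inequality that the paper's proof leaves rather terse (its last line only cites \eqref{eq22}). Your side remarks (positivity of $\int_\Omega a(z)|u^\ast|^{1-\gamma}dz$, strictness $w_{v_t}>\widehat{w}_{v_t}$ to cover $\lambda=\widehat{\lambda}^\ast$) are accurate and in fact address small points the paper glosses over.
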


\begin{proof}
	We introduce the function $\eta_h : [0,+\infty) \to \mathbb{R}$ defined by
	\begin{align} \nonumber
		\eta_h(t) & = (p-1)\|\nabla u^\ast +t \nabla h \|_p^p +(q-1) \int_\Omega \xi(z) |\nabla u^\ast + t \nabla h |^q dz \\ & \quad + \gamma \int_\Omega a(z)|u^\ast +t h|^{1-\gamma} dz - \lambda (r-1)\|u^\ast +th \|^r_r. \label{eq19}
	\end{align}		
	
	Since $u^\ast \in N_\lambda^+ \subseteq N_\lambda$ (see Proposition \ref{P4}), we have
	\begin{align}
		\label{eq20} & \gamma \int_\Omega a(z)|u^\ast|^{1-\gamma}dz= \gamma \|\nabla u^\ast \|_p^p + \gamma \int_\Omega \xi(z)|u^\ast|^{q}dz- \lambda \gamma \|u^\ast\|^r_r,\\ \label{eq21} & (p+\gamma-1)\| \nabla u^\ast \|_p^p + (q+\gamma -1) \int_\Omega \xi(z)|\nabla u^\ast |^q dz -\lambda (r+\gamma-1)\|u^\ast\|^r_r >0.
	\end{align}
	
	It follows from \eqref{eq19}, \eqref{eq20}, \eqref{eq21} that $\eta_h(0)>0$.
	
	The continuity of $\eta_h(\cdot)$ implies that we can find $b_0>0$ such that $$\eta_h(t)>0 \quad \mbox{for all $t \in [0,b_0]$.}$$
	
	On account of Lemma \ref{L5}, we can find $\vartheta(t)>0$, $t \in [0,b_0]$ such that 
	\begin{equation}
	\label{eq22} \vartheta(t)(u^\ast + th) \in N_\lambda^+, \quad \vartheta(t) \to 1 \mbox{ as $t \to 0^+$}. 
	\end{equation}
	
	Therefore we have
	\begin{align*}
		& m_\lambda^+ =\varphi_\lambda(u^\ast)\leq \varphi_\lambda(\vartheta(t)(u^\ast+th)) \quad \mbox{for all $t \in [0, b_0]$},\\ \Rightarrow \quad & m_\lambda^+ \leq \varphi_\lambda(u^\ast)\leq \varphi_\lambda(u^\ast+th) \quad \mbox{for all $t \in [0, b]$ with $0 < b \leq b_0$ (see \eqref{eq22})}.
	\end{align*}
\end{proof}	

The next proposition shows that $N^+_\lambda$ is in fact,
 a natural constraint for the energy functional $\varphi_\lambda$ (see Papageorgiou-R\v{a}dulescu-Repov\v{s} \cite{Ref11}, p. 425).

\begin{prop}
	\label{P7} If hypotheses $H(\xi)$, $H(a)$ hold and $\lambda \in (0, \widehat{\lambda}^\ast)$, then $u^\ast$ is a weak solution of \eqref{PL}.
\end{prop}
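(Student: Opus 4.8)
The plan is to exploit the fact that $u^\ast$ minimizes $\varphi_\lambda$ over $N_\lambda^+$ together with Proposition~\ref{P6}, which provides a one-sided variational inequality along every direction $h \in W_0^{1,p}(\Omega)$. First I would fix $h \in W_0^{1,p}(\Omega)$ and use Proposition~\ref{P6} to get $b>0$ with $\varphi_\lambda(u^\ast) \le \varphi_\lambda(u^\ast + th)$ for all $t \in [0,b]$. The delicate point is that $\varphi_\lambda$ is not $C^1$ because of the singular term, so one cannot simply write $\frac{d}{dt}\varphi_\lambda(u^\ast+th)\big|_{t=0^+}\ge 0$. Instead I would split $\varphi_\lambda = \Phi_\lambda - \Psi$, where $\Phi_\lambda(u) = \frac1p\|\nabla u\|_p^p + \frac1q\int_\Omega \xi|\nabla u|^q\,dz - \frac{\lambda}{r}\|u\|_r^r$ is $C^1$, and $\Psi(u) = \frac{1}{1-\gamma}\int_\Omega a(z)|u|^{1-\gamma}\,dz$ is the (concave, lower semicontinuous, possibly singular) part. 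The function $t \mapsto \Psi(u^\ast + th)$ is concave on $[0,b]$, hence has a right derivative at $0$ in $(-\infty,+\infty]$; the quotient $\frac{\Phi_\lambda(u^\ast+th)-\Phi_\lambda(u^\ast)}{t}$ converges to $\langle \Phi_\lambda'(u^\ast), h\rangle$. Dividing the inequality of Proposition~\ref{P6} by $t>0$ and letting $t \downarrow 0$ yields
$$
\langle \Phi_\lambda'(u^\ast), h\rangle \ge \liminf_{t\downarrow 0}\frac{\Psi(u^\ast+th)-\Psi(u^\ast)}{t} \ge \int_\Omega a(z)(u^\ast)^{-\gamma} h\,dz,
$$
where the last bound comes from Fatou's lemma applied to $\frac{|u^\ast+th|^{1-\gamma}-|u^\ast|^{1-\gamma}}{t}$ (for this one needs $a(z)(u^\ast)^{-\gamma}h \in L^1(\Omega)$ on the relevant set, which is exactly what the computation below also needs to be checked). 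Writing this out,
$$
\int_\Omega |\nabla u^\ast|^{p-2}(\nabla u^\ast,\nabla h)\,dz + \int_\Omega \xi(z)|\nabla u^\ast|^{q-2}(\nabla u^\ast,\nabla h)\,dz - \lambda\int_\Omega (u^\ast)^{r-1}h\,dz \ge \int_\Omega a(z)(u^\ast)^{-\gamma} h\,dz
$$
for all $h \in W_0^{1,p}(\Omega)$.

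Next I would upgrade this inequality to an equality. Since $h$ is arbitrary, one is tempted to replace $h$ by $-h$, but the singular term breaks the symmetry of the argument (the $\liminf$ bound only goes one way, and moreover $u^\ast - th$ may fail to be admissible). The standard device here is to test with $h = u^\ast$ first: plugging $h = u^\ast$ into the inequality and using that $u^\ast \in N_\lambda$ gives $0 \ge 0$, so equality holds for that particular direction, and more importantly this shows $\int_\Omega a(z)(u^\ast)^{1-\gamma}\,dz < \infty$, i.e. $u^\ast$ is in the natural energy class. Then, for general $h \in W_0^{1,p}(\Omega)$ and $\varepsilon > 0$, I would test the inequality with $h_\varepsilon = (u^\ast + \varepsilon h)^+ \in W_0^{1,p}(\Omega)$ (which is admissible and nonnegative), expand on the sets $\{u^\ast + \varepsilon h \ge 0\}$ and its complement, and send $\varepsilon \downarrow 0$; this is the classical Lazer–McKenna / Hirano–Saccon–Servadei type argument for singular problems and it forces
$$
\int_\Omega |\nabla u^\ast|^{p-2}(\nabla u^\ast,\nabla h)\,dz + \int_\Omega \xi(z)|\nabla u^\ast|^{q-2}(\nabla u^\ast,\nabla h)\,dz = \int_\Omega a(z)(u^\ast)^{-\gamma}h\,dz + \lambda\int_\Omega (u^\ast)^{r-1}h\,dz
$$
for every $h \in W_0^{1,p}(\Omega)$. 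Combined with $u^\ast \ge 0$ a.e., $u^\ast \not\equiv 0$ (from $\varphi_\lambda(u^\ast) = m_\lambda^+ < 0$ by Proposition~\ref{P2}), this is precisely the definition of a weak solution of \eqref{PL}.

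The main obstacle is the handling of the singular term $a(z)(u^\ast)^{-\gamma}$: one must simultaneously justify that the difference quotients of $\int_\Omega a|u^\ast+th|^{1-\gamma}\,dz$ are controlled from below by $\int_\Omega a(z)(u^\ast)^{-\gamma}h\,dz$ (monotone/Fatou argument, using $0<\gamma<1$ and $a \in L^\infty$), and that this quantity is finite — the finiteness being guaranteed only a posteriori, after testing with $u^\ast$ itself. The truncation step with $(u^\ast+\varepsilon h)^+$ is where the one-sided inequality is converted to an equality, and care is needed because the set $\{u^\ast = 0\}$ could have positive measure and $a(z)(u^\ast)^{-\gamma}$ is then $+\infty$ there; one handles this by noting that on $\{u^\ast=0\}$ the test function $h_\varepsilon$ localizes to where $\varepsilon h \ge 0$, so the (possibly infinite) singular integrand is multiplied by a nonnegative quantity and the inequality survives the limit. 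All the other terms are standard: the $p$- and $q$-Laplacian terms are $C^1$ in the energy space (using $\xi \in L^\infty$ and $q<p$), and the superlinear term $\lambda(u^\ast)^{r-1}$ is $C^1$ since $r<p^\ast$.
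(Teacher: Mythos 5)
Your proposal is correct, and its first half is exactly the paper's argument: fix $h\in W_0^{1,p}(\Omega)$, invoke Proposition \ref{P6}, divide $\varphi_\lambda(u^\ast+th)-\varphi_\lambda(u^\ast)\geq 0$ by $t>0$ and let $t\to 0^+$ to obtain the one-sided inequality
\[
\int_\Omega a(z)(u^\ast)^{-\gamma}h\,dz+\lambda\int_\Omega (u^\ast)^{r-1}h\,dz\leq \int_\Omega |\nabla u^\ast|^{p-2}(\nabla u^\ast,\nabla h)_{\mathbb{R}^N}dz+\int_\Omega \xi(z)|\nabla u^\ast|^{q-2}(\nabla u^\ast,\nabla h)_{\mathbb{R}^N}dz.
\]
Where you genuinely differ is in the upgrade to equality: the paper concludes with ``since $h$ is arbitrary, equality must hold,'' i.e.\ it replaces $h$ by $-h$, which tacitly assumes that the limit of the singular difference quotient equals the finite quantity $\int_\Omega a(z)(u^\ast)^{-\gamma}h\,dz$ also for sign-changing $h$ --- exactly the delicate point you flag. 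You instead keep the Fatou-based inequality only for $h\geq 0$ (where it is unproblematic) and then run the classical truncation with $(u^\ast+\varepsilon h)^+$, using the Nehari identity for $u^\ast$ to cancel the zeroth-order terms and the shrinking of $\{u^\ast+\varepsilon h<0\}\cap\{u^\ast>0\}$ to dispose of the remainder, before the $h\mapsto -h$ step; this is longer but more robust, since it never differentiates the singular term along sign-changing directions and it yields the integrability of $a(z)(u^\ast)^{-\gamma}h$ as a by-product, whereas the paper's shortcut leaves that limit passage implicit. Two minor corrections to your write-up: $t\mapsto |u^\ast+th|^{1-\gamma}$ is concave (so the monotone/Fatou mechanism applies) only where $u^\ast+th\geq 0$, hence your concavity remark is legitimate only for $h\geq 0$ --- which is all your argument actually uses; and testing with $h=u^\ast$ is not needed to get $\int_\Omega a(z)(u^\ast)^{1-\gamma}dz<\infty$, since this is automatic from $a\in L^\infty(\Omega)$ and $u^\ast\in L^p(\Omega)$ and is already part of the Nehari identity defining $N_\lambda$.
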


\begin{proof}
	Let $h \in W^{1,p}_0(\Omega)$ and let $b>0$ as postulated by Proposition \ref{P6}. For $0\leq t \leq b$ we have
	\begin{align*}
		& 0 \leq \varphi_\lambda(u^\ast +th)-\varphi_\lambda(u^\ast) \quad \mbox{(see Proposition \ref{P6})}, \\ \Rightarrow \quad & \frac{1}{1-\gamma}\int_\Omega a(z)\left[|u^\ast +th|^{1-\gamma}-|u^\ast|^{1-\gamma}\right]dz \\ & \leq \frac{1}{p}\left[\|\nabla u^\ast + t \nabla h\|_p^p -\|\nabla u^\ast \|_p^p\right]+\frac{1}{q}\left[\int_\Omega \xi(z) \left[|\nabla u^\ast + t \nabla h|^q -|\nabla u^\ast |^q\right]dz \right] \\ & \quad - \frac{\lambda}{r}\left[\|  u^\ast + t   h\|_r^r -\|  u^\ast \|_r^r\right].
	\end{align*}	
	
	We divide by $t>0$ and then let $t \to 0^+$. We obtain 
	\begin{align*} & \int_\Omega a(z)(u^\ast)^{-\gamma}h dz\\ &\leq \int_\Omega |\nabla u^\ast |^{p-2}(\nabla u^\ast, \nabla h)_{\mathbb{R}^N}dz +   \int_\Omega \xi(z)|\nabla u^\ast |^{q-2}(\nabla u^\ast, \nabla h)_{\mathbb{R}^N}dz \\ & \quad  - \lambda \int_\Omega  (u^\ast)^{r-1}h dz.
	\end{align*}
	
	Since $h \in W_0^{1,p}(\Omega)$ is arbitrary, equality must hold and so $u^\ast$ is a weak solution of \eqref{PL}, $\lambda \in (0, \widehat{\lambda}^\ast)$.
\end{proof}	

This proposition leads to the first positive solution of \eqref{PL}, $\lambda \in (0, \widehat{\lambda}^\ast)$.

\begin{prop}
	\label{P8} If hypotheses $H(\xi)$, $H(a)$ hold and $\lambda \in (0, \widehat{\lambda}^\ast)$, then problem \eqref{PL} admits a positive solution $u^\ast \in W^{1,p}_0(\Omega)$ such that $\varphi_\lambda(u^\ast)<0$ and $u^\ast(z)\geq 0$ for a.a. $z \in \Omega$, $u^\ast \not \equiv 0$.
\end{prop}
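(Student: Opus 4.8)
The plan is simply to assemble Propositions \ref{P2}, \ref{P4} and \ref{P7}, since by this point all the analytic work has been done. First I would invoke Proposition \ref{P4}: for $\lambda \in (0,\widehat{\lambda}^\ast)$ (and recall $\widehat{\lambda}^\ast \leq \lambda^\ast$) there exists $u^\ast \in N_\lambda^+$ with $\varphi_\lambda(u^\ast)=m_\lambda^+$ and $u^\ast(z)\geq 0$ for a.a. $z \in \Omega$. In particular $N_\lambda^+ \neq \emptyset$, so Proposition \ref{P2} applies and gives $m_\lambda^+<0$; hence $\varphi_\lambda(u^\ast)<0$. Since $\varphi_\lambda(0)=0>\varphi_\lambda(u^\ast)$, we conclude $u^\ast \not\equiv 0$.

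Next I would apply Proposition \ref{P7}, valid for $\lambda \in (0,\widehat{\lambda}^\ast)$, which asserts that $u^\ast$ is a weak solution of \eqref{PL}. By the definition of weak solution adopted in Section \ref{S2}, this already encodes that $u^\ast \in W_0^{1,p}(\Omega)$, that $u^\ast(z)\geq 0$ for a.a. $z \in \Omega$, that $u^\ast \not\equiv 0$, and that the variational identity holds; together with $\varphi_\lambda(u^\ast)<0$ obtained above, this is exactly the positive solution claimed in the statement, with all the listed properties.

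The only thing requiring a word of care is the internal consistency of the parameter range rather than any genuine obstacle: Proposition \ref{P4} places the minimizer in the \emph{open} set $N_\lambda^+$ (and not merely in the closure of $N_\lambda$) precisely because $N_\lambda^0=\emptyset$ for $\lambda<\lambda^\ast$ by Proposition \ref{P3} and $\widehat{\lambda}^\ast \leq \lambda^\ast$, while Proposition \ref{P7} in turn needs $\lambda<\widehat{\lambda}^\ast$; so $(0,\widehat{\lambda}^\ast)$ is indeed the natural domain of validity, and within it there is nothing left to estimate. (If one wishes to make explicit the integrability $a(\cdot)(u^\ast)^{-\gamma}h \in L^1(\Omega)$ implicit in the weak formulation, it already follows from the finite upper bound derived in the proof of Proposition \ref{P7} upon replacing $h$ by $\pm h$.) Thus the proof reduces to collecting these facts.
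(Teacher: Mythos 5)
Your proposal is correct and coincides with the paper's treatment: the paper gives no separate proof of this proposition, presenting it as an immediate consequence of Propositions \ref{P2}, \ref{P4} and \ref{P7}, which is exactly how you assemble it (minimizer $u^\ast\in N_\lambda^+$ with $\varphi_\lambda(u^\ast)=m_\lambda^+<0$, hence $u^\ast\not\equiv 0$, and $u^\ast$ a nonnegative weak solution by Proposition \ref{P7}). Your added remarks on the parameter range $(0,\widehat{\lambda}^\ast)$ and on the integrability of the singular term are consistent with the paper and require no changes.
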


Next, using the set $N_\lambda^-$, we will generate a second positive solution for problem \eqref{PL} when $\lambda>0$ is small.

\begin{prop}
	\label{P9} If hypotheses $H(\xi)$, $H(a)$ hold, then we can find  $\widehat{\lambda}^\ast_0 \in (0, \widehat{\lambda}^\ast]$ such that $\varphi_\lambda \Big|_{N_\lambda^-} \geq 0$ for all $\lambda \in (0, \widehat{\lambda}^\ast_0]$.
\end{prop}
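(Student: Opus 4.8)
The plan is to show that on $N_\lambda^-$ the energy $\varphi_\lambda$ stays nonnegative once $\lambda$ is small enough, by combining a lower bound on the norm of elements of $N_\lambda^-$ with the coercivity-type estimate already used in Proposition~\ref{P1}. First I would recall that for $u \in N_\lambda$ one has the identity \eqref{eq1}, so that, exactly as in Proposition~\ref{P1},
$$\varphi_\lambda(u)=\Big[\tfrac{1}{p}-\tfrac{1}{r}\Big]\|\nabla u\|_p^p+\Big[\tfrac{1}{q}-\tfrac{1}{r}\Big]\int_\Omega \xi(z)|\nabla u|^q\,dz+\Big[\tfrac{1}{r}-\tfrac{1}{1-\gamma}\Big]\int_\Omega a(z)|u|^{1-\gamma}\,dz,$$
and since $q<p<r$ the first two bracketed coefficients are positive while the last is negative. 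Dropping the (nonnegative) weighted $q$-term and using the Sobolev embedding $\int_\Omega a(z)|u|^{1-\gamma}\,dz \le c\|u\|^{1-\gamma}$ gives $\varphi_\lambda(u)\ge c_1\|u\|^p-c_2\|u\|^{1-\gamma}$ with $c_1,c_2>0$ independent of $\lambda$ and of $u$. Hence there is a threshold $\rho_0>0$, independent of $\lambda$, such that $\varphi_\lambda(u)\ge 0$ whenever $u\in N_\lambda$ with $\|u\|\ge \rho_0$.

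The second ingredient is a uniform lower bound $\|u\|\ge \rho_1(\lambda)$ for $u\in N_\lambda^-$, where $\rho_1(\lambda)\to\infty$ as $\lambda\to0^+$ — or at least $\rho_1(\lambda)\ge\rho_0$ for $\lambda$ small. For $u\in N_\lambda^-$ the defining inequality reads
$$(p+\gamma-1)\|\nabla u\|_p^p+(q+\gamma-1)\int_\Omega \xi(z)|\nabla u|^q\,dz<\lambda(r+\gamma-1)\|u\|_r^r.$$
Dropping the weighted $q$-term (its coefficient $q+\gamma-1$ is positive since $q>1$) and using $\|u\|_r^r\le c_3\|u\|^r$ from the Sobolev embedding yields $(p+\gamma-1)\|u\|^p\le \lambda(r+\gamma-1)c_3\|u\|^r$, i.e. $\|u\|^{r-p}\ge \dfrac{p+\gamma-1}{\lambda(r+\gamma-1)c_3}$. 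Since $r>p$, this is a genuine lower bound on $\|u\|$ that blows up as $\lambda\to0^+$.

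Putting the two estimates together: choose $\widehat\lambda^\ast_0\in(0,\widehat\lambda^\ast]$ so small that $\Big(\dfrac{p+\gamma-1}{\lambda(r+\gamma-1)c_3}\Big)^{1/(r-p)}\ge \rho_0$ for all $\lambda\in(0,\widehat\lambda^\ast_0]$. Then every $u\in N_\lambda^-$ satisfies $\|u\|\ge\rho_0$, and therefore $\varphi_\lambda(u)\ge c_1\|u\|^p-c_2\|u\|^{1-\gamma}\ge 0$ for $\lambda$ in that range, which is the claim. The main point requiring care is the first step — making sure the constants $c_1,c_2$ in the bound $\varphi_\lambda(u)\ge c_1\|u\|^p-c_2\|u\|^{1-\gamma}$ genuinely do not depend on $\lambda$ (they come only from Poincaré/Sobolev and $\|a\|_\infty$, so they do not), and that the root $\rho_0$ of $c_1\rho^p=c_2\rho^{1-\gamma}$ is likewise $\lambda$-independent; after that the choice of $\widehat\lambda^\ast_0$ is immediate. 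One should also note that neither estimate uses any positivity or regularity of $u$ beyond $u\in N_\lambda^-\subseteq W_0^{1,p}(\Omega)$, so discontinuity of $\xi$ causes no trouble here.
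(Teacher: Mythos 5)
Your argument is correct. It shares with the paper its first ingredient: from the defining inequality of $N_\lambda^-$, dropping the nonnegative weighted $q$-term and using the embedding $W_0^{1,p}(\Omega)\hookrightarrow L^r(\Omega)$, one gets a lower bound on elements of $N_\lambda^-$ that blows up like $\lambda^{-1/(r-p)}$ as $\lambda\to 0^+$ (the paper states it for $\|u\|_r$, you for $\|u\|$; both are legitimate). Where you genuinely diverge is in the second ingredient. The paper argues by contradiction: assuming $\varphi_\lambda(u)<0$ for some $u\in N_\lambda^-$ and inserting the Nehari identity \eqref{eq25} into \eqref{eq24}, it extracts an \emph{upper} bound $\|u\|_r\le c_{10}\lambda^{-1/(r+\gamma-1)}$, and since $\tfrac{1}{r-p}>\tfrac{1}{r+\gamma-1}$ the two bounds clash for small $\lambda$. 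You instead reuse the coercivity computation of Proposition \ref{P1}: on all of $N_\lambda$ the $\lambda$-term cancels via \eqref{eq1}, giving $\varphi_\lambda(u)\ge c_1\|u\|^p-c_2\|u\|^{1-\gamma}$ with $c_1,c_2$ (hence the root $\rho_0=(c_2/c_1)^{1/(p+\gamma-1)}$) independent of $\lambda$, so $\varphi_\lambda\ge 0$ outside the $\lambda$-independent ball of radius $\rho_0$, and smallness of $\lambda$ pushes $N_\lambda^-$ outside that ball. Your route is more direct (no contradiction, no second bound to derive) and makes transparent why the threshold $\widehat{\lambda}^\ast_0$ exists and works uniformly on $(0,\widehat{\lambda}^\ast_0]$: the lower bound on $\|u\|$ is monotone in $\lambda$. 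The paper's version keeps everything at the level of $L^r$-norms and is self-contained within the proof, but both rest on the same structural facts, and your constants are likewise $\lambda$-independent as you note. The only points needing care, which you handle, are that $u\neq 0$ (so one may divide by $\|u\|^p$) and that the $q$-term is nonnegative by $H(\xi)$ and $q>1$.
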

\begin{proof}
	Let $u \in N_\lambda^-$. We have
	\begin{align}
		\nonumber & (p+\gamma-1)\|\nabla u\|_p^p + (q+\gamma-1)\int_\Omega \xi(z)|\nabla u|^q dz < \lambda (r+\gamma -1)\|u\|_r^r, \\ \nonumber \Rightarrow \quad & (p+\gamma-1)c_8\| u\|_r^p < \lambda (r+\gamma -1)\|u\|_r^r \quad \mbox{for some $c_8>0$}\\ \nonumber & \hskip 1cm \mbox{(here we have used the fact that $W^{1,p}_0(\Omega) \hookrightarrow L^r(\Omega)$),}\\ \label{eq23} \Rightarrow \quad & \left[\frac{(p+\gamma-1)c_8}{\lambda(r+\gamma-1)}\right]^{\frac{1}{r-p}} \leq \|u\|_r.
	\end{align}	
	
	Arguing by contradiction, suppose that the proposition is not true. Then we can find  $u \in N_\lambda^-$ such that 
	\begin{align}
		\nonumber & \varphi_\lambda(u)<0, \\ \Rightarrow \quad & \label{eq24} \frac{1}{p}\| \nabla u \|_p^p + \frac{1}{q}\int_\Omega \xi(z)|\nabla u |^q dz - \frac{1}{1-\gamma}\int_\Omega a(z)|  u |^{1-\gamma} dz - \frac{\lambda}{r}\|u\|_r^r
		<0.
	\end{align}	
	
	We know that $u \in N_\lambda$. Therefore
	\begin{equation}
	\label{eq25} \|\nabla u\|_p^p = \int_\Omega a(z)|  u |^{1-\gamma} dz  +\lambda \|u\|_r^r -\int_\Omega \xi(z)|\nabla u |^q dz.
	\end{equation}
	
	We use \eqref{eq25} in \eqref{eq24} and obtain 
	\begin{align}\nonumber
		&\left[\frac{1}{p} -\frac{1}{1-\gamma}\right] \int_\Omega a(z)| u|^{1-\gamma} dz +\left[\frac{1}{q} -\frac{1}{p}\right] \int_\Omega \xi(z)| \nabla u|^q dz\\ \nonumber & + \lambda \left[\frac{1}{p} -\frac{1}{r}\right] \|u\|_r^r<0,\\ \nonumber \Rightarrow \, & \lambda \left[\frac{1}{p} -\frac{1}{r}\right] \|u\|_r^r< \frac{p+\gamma-1}{p(1-\gamma)}c_9 \|u\|^{1-\gamma}_r \, \mbox{for some $c_9>0$ (recall that $q<p<r$),}
		\\ \nonumber \Rightarrow \, & \|u\|_r^{r+\gamma-1}\leq \frac{(p+\gamma-1)rc_9}{\lambda (r-p)(1-\gamma)},\\ \Rightarrow \, & \label{eq26} \|u\|_r \leq 
		c_{10}\left(\frac{1}{\lambda}\right)^{\frac{1}{r+\gamma-1}}\quad \mbox{for some $c_{10}>0$}.	\end{align}
	
	We use \eqref{eq26} in \eqref{eq23} and obtain
	\begin{align*} &
		c_{11}\left(\frac{1}{\lambda}\right)^{\frac{1}{r-p}}	 \leq c_{10}\left(\frac{1}{\lambda}\right)^{\frac{1}{r+\gamma -1}} \quad \mbox{with } c_{11}=\left[\frac{(p+\gamma-1)c_8}{r+\gamma-1}\right]^{\frac{1}{r-p}}>0,\\ \Rightarrow \quad & c_{12} \leq \lambda^{\frac{1}{r-p}-\frac{1}{r+\gamma -1}}\quad \mbox{with } c_{12}=\frac{c_{11}}{c_{10}}>0,\\ \Rightarrow \quad & c_{12} \leq \lambda^{\frac{p+\gamma-1}{(r-p)(r+\gamma -1)}} \to 0 \quad \mbox{as $\lambda \to 0^+$ (since $1<p<r$, $\gamma \in (0,1)$)},
	\end{align*}
	a contradiction. Therefore we conclude that we can find $\widehat{\lambda}^\ast_0 \in (0, \widehat{\lambda}^\ast]$ such that $\varphi_\lambda \Big|_{N_\lambda^-} \geq 0$ for all $\lambda \in (0, \widehat{\lambda}^\ast_0]$.
\end{proof}	

\begin{prop}
	\label{P10} If hypotheses $H(\xi)$, $H(a)$ hold and $\lambda \in (0, \widehat{\lambda}^\ast_0]$, then there exists $v^\ast \in N^-_\lambda$, $v^\ast \geq 0$ such that $m_\lambda^-=\inf_{N_\lambda^-}\varphi_\lambda=\varphi_\lambda(v^\ast)$.
\end{prop}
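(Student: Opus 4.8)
The plan is to obtain $v^\ast$ as the weak limit of a minimizing sequence for $\varphi_\lambda$ over $N_\lambda^-$, mimicking the structure already used for $N_\lambda^+$ in Proposition \ref{P4} but with the extra sign information supplied by Proposition \ref{P9}. First I would take a minimizing sequence $\{u_n\}_{n\ge 1}\subseteq N_\lambda^-$ with $\varphi_\lambda(u_n)\downarrow m_\lambda^-$. By Proposition \ref{P1}, $\varphi_\lambda|_{N_\lambda}$ is coercive and $N_\lambda^-\subseteq N_\lambda$, so $\{u_n\}$ is bounded in $W_0^{1,p}(\Omega)$; passing to a subsequence, $u_n\xrightarrow{w} v^\ast$ in $W_0^{1,p}(\Omega)$ and $u_n\to v^\ast$ in $L^r(\Omega)$. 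Note also that $m_\lambda^-\ge 0$ by Proposition \ref{P9} (for $\lambda\in(0,\widehat\lambda_0^\ast]$), and from the lower bound \eqref{eq23} we get $\|u_n\|_r\ge c>0$ uniformly, hence $\|v^\ast\|_r\ge c>0$ and in particular $v^\ast\neq 0$.

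The core of the argument is to show $u_n\to v^\ast$ strongly in $W_0^{1,p}(\Omega)$. As in Proposition \ref{P4}, I argue by contradiction: if not, then $\liminf_{n\to+\infty}\|\nabla u_n\|_p^p>\|\nabla v^\ast\|_p^p$. Now I consider the fibering analysis for $v^\ast$: since $v^\ast\neq 0$, the function $w_{v^\ast}$ from Proposition \ref{P4} attains its maximum at some $t_0>0$ with $w_{v^\ast}(t_0)-\lambda\|v^\ast\|_r^r>0$ (here \eqref{eq10} is used, valid since $\lambda<\widehat\lambda^\ast$), so there is a unique $t_2>t_0$ with $w_{v^\ast}(t_2)=\lambda\|v^\ast\|_r^r$, $w_{v^\ast}'(t_2)<0$, i.e. $t_2 v^\ast\in N_\lambda^-$. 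Using the strict inequality on the gradient norms together with the strong $L^r$ convergence, one shows $\mu_{u_n}'(t_2)>\mu_{v^\ast}'(t_2)=0$ for large $n$ in the same way as \eqref{eq14}, where $\mu_u(t)=\varphi_\lambda(tu)$; since $u_n\in N_\lambda^-$ forces $\mu_{u_n}'(1)=0$ and $\mu_{u_n}$ is increasing then decreasing past its second critical point, this pins down $t_2<1$. On $(t_2,1)$ the function $\mu_{v^\ast}$ is decreasing (again by the strict gradient inequality, $\mu_{v^\ast}'(t)<\liminf_n\mu_{u_n}'(t)\le 0$ there after shrinking), so $\varphi_\lambda(t_2 v^\ast)\le\varphi_\lambda(v^\ast)<\liminf_n\varphi_\lambda(u_n)=m_\lambda^-$, contradicting $t_2 v^\ast\in N_\lambda^-$ and $m_\lambda^-=\inf_{N_\lambda^-}\varphi_\lambda$. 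Hence $u_n\to v^\ast$ strongly, $\varphi_\lambda(v^\ast)=m_\lambda^-$, and passing to the limit in the defining (in)equalities for $N_\lambda^-$ gives $v^\ast\in N_\lambda^0\cup N_\lambda^-$; since $N_\lambda^0=\emptyset$ for $\lambda<\lambda^\ast$ (Proposition \ref{P3}), in fact $v^\ast\in N_\lambda^-$. Replacing $v^\ast$ by $|v^\ast|$ (which lies in $N_\lambda^-$ with the same energy, by the symmetry of all terms in $\varphi_\lambda$ and in the constraint) we may take $v^\ast\ge 0$ a.e.

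The main obstacle I anticipate is the bookkeeping on the fibering map near $t=1$: one must verify that the limit point $t_2$ associated with $v^\ast$ is genuinely strictly less than $1$ so that the monotonicity of $\mu_{v^\ast}$ on $(t_2,1)$ can be exploited to beat $m_\lambda^-$. This requires combining (i) the sign $\mu_{u_n}'(1)=0$ from $u_n\in N_\lambda^-$, (ii) the shape of $\mu_{u_n}$ (it has at most two positive critical points, a local min $t_1$ and a local max $t_2$, with $t_1<t_2$ and $N_\lambda^-$ corresponding to the max), and (iii) the strict inequality \eqref{eq13} to transfer the relevant inequality from $u_n$ to $v^\ast$ in the limit — exactly the mechanism of \eqref{eq14}–\eqref{eq16} but with the roles of "decreasing before the critical point'' replaced by "decreasing after the max''. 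A secondary, more routine point is confirming $v^\ast\not\equiv 0$, which I would secure up front from the uniform lower bound \eqref{eq23} on $\|u_n\|_r$ together with the $L^r$-convergence.
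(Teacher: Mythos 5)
Your overall route (minimizing sequence in $N_\lambda^-$, boundedness from Proposition \ref{P1}, weak limit $v^\ast$, nonvanishing of $v^\ast$ via \eqref{eq23}, the point $t_2$ with $t_2v^\ast\in N_\lambda^-$ as in \eqref{eq27}, contradiction argument forcing strong convergence, then $N_\lambda^0=\emptyset$ to place $v^\ast$ in $N_\lambda^-$) is exactly the paper's intended argument. However, the decisive step of your contradiction is wrong as written. Having located $t_2<1$, you claim that the decrease of $\mu_{v^\ast}$ on $(t_2,1)$ yields $\varphi_\lambda(t_2v^\ast)\le\varphi_\lambda(v^\ast)$; monotone decrease on $(t_2,1)$ with $t_2<1$ gives the \emph{reverse} inequality $\mu_{v^\ast}(t_2)\ge\mu_{v^\ast}(1)=\varphi_\lambda(v^\ast)$ (indeed $t_2$ is the maximizer of $\mu_{v^\ast}$ on $[t_1(v^\ast),+\infty)$, so it cannot lie below the value at $1$). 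Moreover your supporting claim $\liminf_n\mu_{u_n}^\prime(t)\le 0$ on $(t_2,1)$ is also false: you have just shown $\mu_{u_n}^\prime(t_2)>0$, which places $t_2$ in $(t_1(u_n),1)$, precisely the interval where $\mu_{u_n}^\prime>0$. With the correct signs your chain only gives $m_\lambda^-\le\varphi_\lambda(t_2v^\ast)$, $\varphi_\lambda(t_2v^\ast)\ge\varphi_\lambda(v^\ast)$ and $\varphi_\lambda(v^\ast)<m_\lambda^-$, which are mutually consistent, so no contradiction is reached and strong convergence is not established.

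The repair is to run the comparison along the sequence at the level $t_2$ rather than through $\varphi_\lambda(v^\ast)$: from \eqref{eq13} (together with weak lower semicontinuity of the $\xi$-term and the strong convergence of the $a$- and $L^r$-terms) one gets $\mu_{v^\ast}(t_2)<\liminf_n\mu_{u_n}(t_2)$, while $\mu_{u_n}(t_2)\le\mu_{u_n}(1)=\varphi_\lambda(u_n)$ because $\mu_{u_n}$ is increasing on $(t_2,1)\subseteq(t_1(u_n),1)$ (alternatively, because $\mu_{u_n}(t)\le 0\le\varphi_\lambda(u_n)$ for $t\le t_1(u_n)$ by Proposition \ref{P9} and $1$ maximizes $\mu_{u_n}$ on $[t_1(u_n),+\infty)$). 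Hence $m_\lambda^-\le\varphi_\lambda(t_2v^\ast)=\mu_{v^\ast}(t_2)<\liminf_n\varphi_\lambda(u_n)=m_\lambda^-$, the desired contradiction; this is the mechanism the paper invokes through \eqref{eq27}. The remaining parts of your proposal (uniform lower bound on $\|u_n\|_r$, passage to the limit in the $N_\lambda$-equality after strong convergence, exclusion of $N_\lambda^0$ via Proposition \ref{P3}, replacement of $v^\ast$ by $|v^\ast|$) are correct and in line with the paper.
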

\begin{proof}
	The reasoning is similar to that in the proof of Proposition \ref{P4}. If $\{v_n\}_{n \geq 1} \subseteq N_\lambda^-$  is  a minimizing sequence, then on account of Proposition \ref{P1}, we have that $\{v_n\}_{n \geq 1} \subseteq W_0^{1,p}(\Omega)$ is bounded. So, we may assume that
	$$v_n \xrightarrow{w} v^\ast \mbox{ in $W_0^{1,p}(\Omega)$ and $v_n \to v^\ast$ in $L^r(\Omega)$ as } n \to +\infty.$$ 
	
	From the proof of Proposition \ref{P4} we can find $t_0<t_2$ such that  \begin{equation}
	\label{eq27}  \mbox{$w_{v^\ast}^\prime(t_2)<0$ and $w_{v^\ast}(t_2)=\lambda \|v^\ast\|_r^r$ (see \eqref{eq11}),}
	\end{equation} 
	($t_0>0$ being the maximizer of $w_{v^\ast}$). We argue as in the proof of Proposition \ref{P4} and using \eqref{eq27}, 
	we obtain that $v^\ast \in N^-_\lambda$, $v^\ast \geq 0$, $m_\lambda^-=\varphi_\lambda(v^\ast)$.
\end{proof}

Using Lemma \ref{L5} and reasoning as in the proofs of Propositions \ref{P6} and \ref{P7}, we can also prove the following proposition. 

\begin{prop}
	\label{P11} If hypotheses $H(\xi)$, $H(a)$ hold and $\lambda \in (0, \widehat{\lambda}^\ast)$, then $v^\ast$ is a weak solution of \eqref{PL}. \qed
\end{prop}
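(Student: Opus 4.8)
The plan is to follow the pattern of Propositions~\ref{P6} and \ref{P7}, now with the minimizer $v^\ast\in N_\lambda^-$ of Proposition~\ref{P10} in place of $u^\ast\in N_\lambda^+$. The one substantive difference is that $v^\ast$ minimizes $\varphi_\lambda$ over the ``unstable'' set $N_\lambda^-$ --- for $u\in N_\lambda^-$ the fibering map $\mu_u$ has a local maximum at $t=1$ --- so $v^\ast$ is \emph{not} a local minimizer of $\varphi_\lambda$ on $W^{1,p}_0(\Omega)$ (already $t\mapsto\varphi_\lambda(tv^\ast)$ decreases past $t=1$), and the intermediate inequality $\varphi_\lambda(v^\ast)\le\varphi_\lambda(v^\ast+th)$ used in Proposition~\ref{P6} is no longer available; one must work on the Nehari manifold directly.

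Fix $h\in W^{1,p}_0(\Omega)$ and apply Lemma~\ref{L5} at $v^\ast\in N_\lambda^-$. Since $v^\ast\in N_\lambda^-$, one has $E'_t(0,1)<0$ (directly from the definition of $N_\lambda^-$), so the implicit function theorem follows the branch on which $E'_t<0$ and the conclusion of the lemma delivers $\beta(w)(v^\ast+w)\in N_\lambda^-$ for $w\in B_\varepsilon(0)$. Putting $\vartheta(t):=\beta(th)$ for $|t|\,\|h\|<\varepsilon$, we obtain $\vartheta(t)(v^\ast+th)\in N_\lambda^-$ with $\vartheta(t)\to1$ as $t\to0^+$; hence, since $\varphi_\lambda(v^\ast)=m_\lambda^-=\inf_{N_\lambda^-}\varphi_\lambda$,
$$0\le\varphi_\lambda\bigl(\vartheta(t)(v^\ast+th)\bigr)-\varphi_\lambda(v^\ast)\qquad\text{for all sufficiently small }t\ge0.$$

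Now, as in Proposition~\ref{P7}, divide by $t>0$ and let $t\to0^+$, expanding $\varphi_\lambda(\vartheta(t)(v^\ast+th))$ along the fiber through $v^\ast+th$. Setting $\delta(t)=\vartheta(t)-1\to0$: the terms linear in $\delta(t)$ carry the total coefficient $\|\nabla v^\ast\|_p^p+\int_\Omega\xi(z)|\nabla v^\ast|^q\,dz-\int_\Omega a(z)|v^\ast|^{1-\gamma}\,dz-\lambda\|v^\ast\|_r^r=0$ (the Nehari identity, $v^\ast\in N_\lambda$), while the terms quadratic in $\delta(t)$ carry the total coefficient $\tfrac12\mu_{v^\ast}''(1)<0$ ($v^\ast\in N_\lambda^-$); hence neither group obstructs the inequality after the division and the limit. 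What remains is the one-sided directional derivative of $\varphi_\lambda$ at $v^\ast$ along $h$: its smooth part equals
$$\int_\Omega|\nabla v^\ast|^{p-2}(\nabla v^\ast,\nabla h)_{\mathbb{R}^N}\,dz+\int_\Omega\xi(z)|\nabla v^\ast|^{q-2}(\nabla v^\ast,\nabla h)_{\mathbb{R}^N}\,dz-\lambda\int_\Omega(v^\ast)^{r-1}h\,dz,$$
while the singular part is treated with Fatou's lemma exactly as in Proposition~\ref{P7}, producing $\int_\Omega a(z)(v^\ast)^{-\gamma}h\,dz$ as a lower bound. Therefore $\int_\Omega a(z)(v^\ast)^{-\gamma}h\,dz$ does not exceed the displayed smooth expression, and since $h\in W^{1,p}_0(\Omega)$ is arbitrary, replacing $h$ by $-h$ turns this inequality into an equality: $v^\ast$ is a weak solution of \eqref{PL}.

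The main obstacle is the passage to the limit in the singular term, i.e. in $\tfrac1t\int_\Omega a(z)\bigl[|v^\ast+th|^{1-\gamma}-|v^\ast|^{1-\gamma}\bigr]\,dz$ as $t\to0^+$: the difference quotient need not converge for an arbitrary $h$, the obstruction living on $\{z:v^\ast(z)=0\}$, so one proves the inequality first for test functions $h$ with $\int_\Omega a(z)(v^\ast)^{-\gamma}|h|\,dz<+\infty$ (using the concavity of $s\mapsto s^{1-\gamma}$ and dominated convergence, or Fatou's lemma) and then extends. Everything else --- the use of Lemma~\ref{L5}, the fibering expansion, and the $h\mapsto-h$ symmetrization --- parallels the treatment of the first solution $u^\ast$ and is routine.
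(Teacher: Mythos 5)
Your proposal is correct and is essentially the route the paper intends (the paper only gestures at ``Lemma~\ref{L5} and the reasoning of Propositions~\ref{P6}, \ref{P7}''): you rightly note that the verbatim Proposition~\ref{P6} inequality $\varphi_\lambda(v^\ast)\le\varphi_\lambda(v^\ast+th)$ fails on $N_\lambda^-$ (already for $h=v^\ast$, since $t=1$ is a local maximum of the fibering map) and replace it by the minimality of $v^\ast$ over $N_\lambda^-$ together with $\mu_{v^\ast}'(1)=0$, $\mu_{v^\ast}''(1)<0$, the singular term then being handled by Fatou exactly as in Proposition~\ref{P7}. One cosmetic improvement: rather than expanding along the fiber through $v^\ast+th$ (whose linear-in-$\delta(t)$ coefficient is $\mu_{v^\ast+th}'(1)$, which again involves the singular difference quotient), it is cleaner to write $\varphi_\lambda(\vartheta(t)(v^\ast+th))-\varphi_\lambda(v^\ast)=\bigl[\varphi_\lambda(\vartheta(t)(v^\ast+th))-\varphi_\lambda(\vartheta(t)v^\ast)\bigr]+\bigl[\mu_{v^\ast}(\vartheta(t))-\mu_{v^\ast}(1)\bigr]$ and discard the second bracket, which is $\le 0$ outright by the local-maximum property, so no rate for $\delta(t)$ is ever needed.
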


This also completes the proof of our main result, Theorem~\ref{MT}. \qed 

\begin{rem}
	It would be interesting to study if one can get such a multiplicity result for double phase problems with a differential operator of unbalanced growth, that is, of the form 
	$$- \mbox{div}\, \left( \xi(z)|\nabla u|^{p-2}\nabla u \right)	- \Delta_q u \quad \mbox{with } 1<q<p.$$
	
	For this operator,
	 the integrand in the corresponding energy functional is
	$$k(z,t)=\frac{1}{p}\xi(z)t^p + \frac{1}{q}t^q \quad \mbox{for all }t>0.$$
	
	Note that for this integrand we have 
	$$\frac{1}{q}t^q \leq k(z,t) \leq \widehat{c}[1+t^p] \quad \mbox{for some $\widehat{c}>0$, all }t>0,$$
	(unbalanced growth). For such problems we need to work with Musielak-Orlicz-Sobolev spaces. Also, we need to strengthen the condition of $\xi(\cdot)$ ($\xi: \overline{\Omega}\to \mathbb{R}$ is Lipschitz continuous, $\xi(z)>0$ for all $z \in \Omega$), as well as restrict the exponents $1<q<p$ and require that $\frac{p}{q}<1+\frac{1}{N}$, which means that $p$ and $q$ cannot differ much (see \cite{Ref4, Ref10}).
\end{rem}

\section*{Acknowledgements}
The second author was supported by the Slovenian Research Agency grants
P1-0292, J1-8131, N1-0114, N1-0083, and N1-0064. We thank the referee for comments.

\end{document}